
\documentclass[12pt]{article}

\usepackage{graphicx}
\usepackage{amsthm,xcolor}
\usepackage{amsfonts}
\usepackage{amsmath}
\usepackage{amscd}
\usepackage{amssymb}
\usepackage{alltt}
\usepackage{url}
\usepackage{ellipsis}
%
\usepackage{tikz} %
\usetikzlibrary{chains,shapes,arrows,%
 trees,matrix,positioning,decorations}

\def\tikzfig#1#2#3{%
\begin{figure}[htb]%
  \centering
\begin{tikzpicture}#3
\end{tikzpicture}
  \caption{#2}
  \label{fig:#1}%
\end{figure}%
}
\def\smalldot#1{\draw[fill=black] (#1) %
 node [inner sep=1.3pt,shape=circle,fill=black] {}}

\newtheorem{theorem}{Theorem}[subsection]
\newtheorem{lemma}[theorem]{Lemma}

\newcommand{\ring}[1]{\mathbb{#1}}
\newcommand{\op}[1]{\hbox{#1}}
\newcommand{\f}[1]{\frac{1}{#1}}
\newcommand{\Eaff}{E_{\op{\scriptsize\it aff}}}
\newcommand{\Eaf}[1]{E_{\op{\scriptsize\it aff},{#1}}}
\newcommand{\Eoo}{E^{\circ}} 

\newcommand{\Go}{\langle\rho\rangle}

\newcommand{\ang}[1]{\langle{#1}\rangle}

\title{The Group Law for Edwards Curves}
\author{Thomas C. Hales}
\date{}

\begin{document}

\maketitle

\begin{abstract} 
  This article gives an elementary computational proof of the group
  law for Edwards elliptic curves following Bernstein, Lange, et al.,
  Edwards, and Friedl.  The associative law is expressed as a
  polynomial identity over the integers that is directly checked by
  polynomial division.  No preliminaries such as intersection numbers,
  B\'ezout's theorem, projective geometry, divisors, or Riemann Roch
  are required.  The proofs have been designed to facilitate the
  formal verification of elliptic curve cryptography.
\end{abstract}

\parskip=0.8\baselineskip
\baselineskip=1.05\baselineskip

\newenvironment{blockquote}{%
  \par%
  \medskip%
  \baselineskip=0.7\baselineskip%
  \leftskip=2em\rightskip=2em%
  \noindent\ignorespaces}{%
  \par\medskip}

This article started with my frustration in teaching the elliptic
curve group law in an undergraduate course in cryptography.  I needed
a simple proof of associativity.  At the same time, my work on the
formal verification of mathematics made me wary of the so-called
simple proofs on the internet that achieve their simplicity by
skipping cases or by relying on unjustified machinery.

Edwards curves have been widely promoted because their addition law
avoids exceptional cases.  It is natural to ask whether the proof of
the associative law also avoids exceptional cases when expressed in
terms of Edwards curves.  Indeed, this article gives a two-line proof
of the associative law for ``complete'' Edwards curves that avoids
case splits and all the usual machinery.

At the same time, we motivate the addition law.  The usual chord and
tangent addition law for Weierstrass curves can seem terribly
unmotivated at first sight.  We show that the group law for a circle
can be described by a geometric construction, which motivates elliptic
curve addition, because the same geometric construction applied to an
Edwards curve gives its group law, by \cite{arene2011faster}.  One
pleasant surprise is that our proof of the group axioms applies
uniformly to both the circle and the Edwards curve.

\section{The Circle}

The unit circle $\ring{C}_1$ in the complex plane $\ring{C}$ is a
group under complex multiplication, or equivalently under the addition
of angles in polar coordinates:
\begin{equation}\label{eqn:cx}
(x_1,y_1) * (x_2,y_2) = (x_1 x_2 - y_1 y_2,x_1 y_2 + x_2 y_1).
\end{equation}
We write $\iota(x,y) = (x,-y)$ for complex conjugation, the inverse in
$\ring{C}_1$.

We give an unusual interpretation of the group law on the unit circle
that we call {\it hyperbolic addition}.  We consider the family of
hyperbolas in the plane that pass through the point $z=(-1,0)$ and
whose asymptotes are parallel to the coordinate axes.  The equation of
such a hyperbola has the form
\begin{equation}\label{eqn:hyp}
x y + p (x+1) + q y = 0.
\end{equation}
All hyperbolas in this article are assumed to be of this form.  As
special cases (such as $x y=0$), this family includes pairs of lines.
 
Every two points $z_1$ and $z_2$ on the unit circle intersect some
hyperbola within the family.  This incidence condition uniquely
determines $p$ and $q$ when $(-1,0)$, $z_1$ and $z_2$ are not
collinear.  As illustrated in Figure~\ref{fig:fig1}, the hyperbola
meets the unit circle in one additional point $z_3 = (x_3,y_3)$.  The
following remarkable relationship holds among the three points $z_1$,
$z_2$, and $z_3$ on the intersection of the circle and hyperbola.

\begin{lemma}[hyperbolic addition on the circle] \label{lemma:ha} Let
  $z_0=(-1,0)$, $z_1$, $z_2$ and $z_3$ be four distinct points on the
  intersection of the unit circle with a hyperbola in the family (\ref{eqn:hyp}).  
  Then $z_1 z_2 z_3 = 1$ in $\ring{C}_1$.
\end{lemma}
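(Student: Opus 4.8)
The plan is to collapse the entire statement into Vieta's formula for a single quartic. First I would identify each point $(x,y)$ on the unit circle with the complex number $z = x + iy$, so that the group law $*$ of (\ref{eqn:cx}) is literally multiplication in $\ring{C}^*$ and the desired conclusion $z_1 z_2 z_3 = 1$ becomes an ordinary statement about a product of complex numbers. Under this identification the map $z \mapsto \bigl((z+z^{-1})/2,\ (z - z^{-1})/(2i)\bigr)$ is an isomorphism from $\ring{C}^*$ onto the complex circle $x^2+y^2=1$, with inverse $(x,y)\mapsto x+iy$; note that it sends $z=-1$ to $z_0 = (-1,0)$, and that the substitution satisfies $x^2+y^2=1$ identically in $z$.

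Next I would substitute $x = (z+z^{-1})/2$ and $y = (z-z^{-1})/(2i)$ into the hyperbola equation (\ref{eqn:hyp}) and clear denominators by multiplying through by $4z^2$. A routine expansion turns the incidence condition into the quartic
\[
-i\,z^4 + (2p - 2qi)\,z^3 + 4p\,z^2 + (2p + 2qi)\,z + i = 0
\]
in the single variable $z$, whose leading coefficient $-i$ and constant term $i$ I would record for the last step. Because the parametrization is a bijection, the four distinct intersection points $z_0, z_1, z_2, z_3$ correspond to four distinct values of $z$, which are therefore four distinct roots of this degree-four polynomial, hence all of its roots.

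Finally, Vieta's formula gives the product of the four roots as $(-1)^4$ times the ratio of constant term to leading coefficient, namely $i/(-i) = -1$, so that $z_0 z_1 z_2 z_3 = -1$. Since $z_0 = -1$, dividing yields $z_1 z_2 z_3 = 1$, which is precisely the assertion of the lemma once we translate back through the identification of the circle with $\ring{C}^*$.

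The algebra here is routine; the step that demands care is the bridge between geometry and algebra in the second paragraph. I must verify that the leading coefficient $-i$ is nonzero, so that the quartic genuinely has degree four and exactly four roots counted with multiplicity, and that the parametrization neither loses an intersection point nor introduces a spurious one. The latter is clean because $x^2+y^2=1$ holds identically under the substitution, so every root of the quartic does produce a genuine point of the circle. Granting the hypothesis that the four points are distinct, they account for all four roots, and no case analysis is needed, not even for the degenerate pair-of-lines specializations such as $xy=0$.
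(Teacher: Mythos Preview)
Your proof is correct and takes a genuinely different route from the paper. The paper does not prove this lemma directly; it defers to the later Lemma~\ref{lemma:hyperbola}, which works uniformly over all Edwards curves (the circle being the special case $c=1$, $d=0$) and proceeds by exhibiting, via multivariate polynomial division, an identity showing that $\iota(P_1\oplus P_2)$ lies on the hyperbola through $z_0,z_1,z_2$. Your argument instead exploits the special feature of the circle that it admits a rational parametrization by $\ring{C}^*$ turning the group law into ordinary multiplication, so that the entire statement collapses to a single application of Vieta on a quartic in one variable. This is more elementary and self-contained---no computer algebra, no Edwards-curve machinery---and it makes the identity $z_1 z_2 z_3 = 1$ literally visible as the constant-to-leading ratio. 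The trade-off is that your method does not extend to the Edwards case (where no such parametrization by a multiplicative group exists), whereas the paper's polynomial-division argument handles the circle and the genus-one curves in one stroke, and moreover yields the explicit formula~(\ref{eqn:sum}) for the fourth point rather than only the product relation.
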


The lemma is a special case of a more general lemma
(Lemma~\ref{lemma:hyperbola}) that is proved later in this article.

This gives a geometric construction of the group law: the product of
the two points $z_1$ and $z_2$ on the unit circle is $\iota(z_3)$.
Rather than starting with the standard formula for addition in
$\ring{C}_1$, we can reverse the process, defining a binary operation
$(\oplus)$ on the circle by setting
\[
z_1 \oplus z_2 = \iota(z_3)
\]
whenever $z_1$, $z_2$, and $z_3$ are related by the circle and
hyperbola construction.  We call the binary operation $\oplus$ {\it
  hyperbolic addition} on the circle.

It might seem that there is no point in reinterpreting complex
multiplication on the unit circle as hyperbolic addition, because they
are actually the same binary operation, and the group $\ring{C}_1$ is
already perfectly well-understood.  However, in the next section, we
will see that hyperbolic addition generalizes in ways that ordinary
multiplication does not.  In this sense, we have found a better
description of the group law on the circle.  The same description
works for elliptic curves!


\tikzfig{fig1}{A unit circle centered at the origin
and hyperbola meet at four points $z_0 = (-1,0)$, $z_1$, $z_2$, and $z_3$,
where $z_1 z_2 z_3 = 1$, which we write alternatively in additive notation 
as $z_1\oplus z_2 = \iota(z_3)$.}{
{
\draw (0,0) circle (1);
\draw plot[smooth] file {figC1.table};
\draw plot[smooth] file {figC2.table};
\smalldot {-1,0} node[anchor=east] {$z_0$};
\smalldot {12/13,5/13} node[anchor=west] {$z_1$};
\smalldot {7/25,24/25} node[anchor=south west] {$z_2$};
\smalldot {-36/325,-323/325} node[anchor=north east] {$z_3$};
}
}

\section{Deforming the Circle}

We can use exactly the same hyperbola construction to define a binary
operation $\oplus$ on other curves.  We call this {\it hyperbolic
  addition} on a curve.  We replace the unit circle with a more
general algebraic curve $C$, defined by the zero set of
\begin{equation}\label{eqn:ed}
x^2 + c y^2 - 1 - d x^2 y^2
\end{equation} for some parameters $c$ and $d$.
This zero locus of this polynomial is called an {\it Edwards curve}.%
\footnote{This definition is more inclusive than definitions stated
  elsewhere.  Most writers prefer to restrict to curves of genus one
  and generally call a curve with $c\ne 1$ a {\it twisted Edwards
    curve}.  We have interchanged the $x$ and $y$ coordinates on the
  Edwards curve to make it consistent with the group law on the
  circle.}  The unit circle corresponds to parameter values $c=1$ and
$d=0$.

\tikzfig{fig2}{The figure on the left is an Edwards curve (with
  parameters $c=0$ and $d=-8$).  An Edwards curve and hyperbola meet
  at four points $z_0 = (-1,0)$, $z_1$, $z_2$, and $z_3$.  By
  construction, hyperbolic addition satisfies $z_1 \oplus z_2 =
  \iota(z_3)$.}
{
{
\begin{scope}[xshift=0]
\draw plot[smooth] file {figE1.table};
\draw plot[smooth] file {figE2.table};
\end{scope}
\begin{scope}[xshift=5cm]
\draw plot[smooth] file {figE1.table};
\draw plot[smooth] file {figE2.table};
\draw plot[smooth] file {figE3.table};
\draw plot[smooth] file {figE4.table};
\smalldot {-1,0} node[anchor=east] {$z_0$};
\smalldot {0.9,0.158} node[anchor=west] {$z_1$};
\smalldot {0.2,0.853} node[anchor=south west] {$z_2$};
\smalldot {0.037,-0.993} node[anchor=north east] {$z_3$};
\end{scope}
}
}

We define a binary operation on the Edwards curve by the hyperbolic
addition law described above.  Let $(-1,0)$, $z_1 = (x_1,y_1)$ and
$z_2=(x_2,y_2)$ be three points on an Edwards curve that are not
collinear (to avoid degenerate cases).  We fit a hyperbola of the
usual form (\ref{eqn:hyp}) through these three points, and let $z_3$
be the fourth point of intersection of the hyperbola with the curve.
We define the hyperbolic sum $z_1\oplus z_2$ of $z_1$ and $z_2$ to be
$\iota(z_3)$.  The following lemma gives an explicit formula for
$z_1\oplus z_2 = \iota(z_3)$.
 
 \begin{lemma}\label{lemma:hyp} 
In this construction, the coordinates are given explicitly by
 \begin{equation}\label{eqn:sum}
 \iota(z_3) = \left(\frac{x_1 x_2 - c y_1 y_2}{1 - d x_1 x_2 y_1 y_2},
\frac{x_1 y_2 + y_1 x_2}{1+d x_1 x_2 y_1 y_2}\right)
 \end{equation}
 \end{lemma}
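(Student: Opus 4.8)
The plan is to fit the hyperbola explicitly, intersect it with the Edwards curve, and read off the fourth intersection point using nothing more than Vieta's formulas for the product of the roots of a polynomial. First I would determine $p$ and $q$. Since (\ref{eqn:hyp}) already passes through $z_0=(-1,0)$, the two remaining incidence conditions
\[
x_iy_i + p(x_i+1) + qy_i = 0,\qquad i=1,2,
\]
form a linear system in $(p,q)$ whose determinant is $D = (x_1+1)y_2 - (x_2+1)y_1$. This $D$ is nonzero precisely because $z_0,z_1,z_2$ are not collinear, and Cramer's rule gives $p=P/D$ and $q=Q/D$ with $P = y_1y_2(x_2-x_1)$ and $Q = x_1x_2(y_1-y_2) + x_1y_1 - x_2y_2$.

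Next I would intersect the hyperbola with the curve. Solving (\ref{eqn:ed}) for $x^2 = (1-cy^2)/(1-dy^2)$ and (\ref{eqn:hyp}) for $x = -(p+qy)/(y+p)$, then equating and clearing denominators, produces the quartic
\[
(c-dq^2)y^4 + 2p(c-dq)y^3 + \bigl(q^2+(c-d)p^2-1\bigr)y^2 + 2p(q-1)y = 0 .
\]
The factor $y$ accounts for the root $z_0$; the remaining cubic has roots $y_1,y_2,y_3$, since $z_1,z_2$ lie on both curves and the pairing $x = -(p+qy)/(y+p)$ shows the third root belongs to the genuine fourth point $z_3$. The product of the cubic's roots then gives
\[
y_3 = \frac{-2p(q-1)}{(c-dq^2)\,y_1y_2}.
\]
A symmetric computation—solving (\ref{eqn:ed}) for $y^2$, substituting $y = -p(x+1)/(x+q)$, and dividing out the factor $(x+1)$ belonging to $z_0$—yields a cubic in $x$ whose root product gives $x_3 = (q^2-cp^2)/\bigl((1-dp^2)\,x_1x_2\bigr)$. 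Because $\iota(z_3)=(x_3,-y_3)$, it remains only to match $x_3$ and $-y_3$ with (\ref{eqn:sum}).

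The main obstacle is this final reduction. After substituting $p=P/D$ and $q=Q/D$, both $x_3$ and $-y_3$ become ratios of bulky polynomials in $x_1,x_2,y_1,y_2$ (and $c,d$), and they collapse to the compact form (\ref{eqn:sum}) only after repeatedly using the two curve relations $x_i^2+cy_i^2-1-dx_i^2y_i^2=0$ to eliminate quadratic terms. Concretely, I expect the identities
\[
\frac{-2p(q-1)}{(c-dq^2)y_1y_2} = \frac{x_1y_2+y_1x_2}{1+dx_1x_2y_1y_2},\qquad \frac{q^2-cp^2}{(1-dp^2)x_1x_2} = \frac{x_1x_2-cy_1y_2}{1-dx_1x_2y_1y_2}
\]
to hold after cross-multiplying and reducing by those two relations—exactly the sort of integer polynomial identity the introduction proposes to check mechanically by division. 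I would also record the generic caveats that the hypotheses secure: non-collinearity gives $D\ne 0$, distinctness keeps $y_1y_2$ and $x_1x_2$ nonzero, and the leading coefficients $c-dq^2$ and $1-dp^2$ must not vanish for the Vieta step, so the computation is valid away from these degenerate loci (and as a formal identity elsewhere).
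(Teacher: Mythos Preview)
Your approach is correct but genuinely different from the paper's. The paper proves this lemma by forward reference to Lemma~\ref{lemma:hyperbola}, and that proof runs the computation in the opposite direction: it \emph{starts} from the formula~(\ref{eqn:sum}), writes $h(x_3',-y_3')=r/(D\delta)$ where $(x_3',y_3')$ is the right-hand side of~(\ref{eqn:sum}), and then checks by a single polynomial division that $r\equiv 0\bmod\{e_1,e_2\}$. In other words, the paper verifies that the formula point lies on the hyperbola (closure on the Edwards curve being Lemma~\ref{lemma:closure}), rather than deriving the formula from the intersection. Your Vieta argument is the constructive counterpart: you actually solve for the fourth root and then simplify. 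What this buys you is an explanation of where the formula comes from; what the paper's route buys is that the entire proof collapses to one mechanical polynomial-reduction step, with no need to track leading-coefficient nondegeneracy or the separate elimination in $x$ and $y$. One small slip: in your final displayed identities the left-hand side of the $y$-equation is your expression for $y_3$, not for $-y_3$, so a sign needs adjusting before the cross-multiplied reduction.
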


 This lemma will be proved below (Lemma~\ref{lemma:hyperbola}).  Until
 now, we have assumed the points $(-1,0)$, $z_1$, and $z_2$ are not
 collinear.  Dropping the assumption of non-collinearity, we turn
 formula (\ref{eqn:sum}) of Lemma~\ref{lemma:hyp} into a definition
 and define the hyperbolic sum
\[
 z_1\oplus z_2 := \iota(z_3).
\]
algebraically by that formula in all cases.  We prove below an affine
closure result (Lemma~\ref{lemma:affine}) showing that the
denominators are always nonzero for suitable parameters $c$ and $d$.
In the case of a circle ($c=1$, $d=0$), the formula (\ref{eqn:sum})
reduces to the usual group law (\ref{eqn:cx}).

\section{Group Axioms}\label{sec:axiom}

This section gives an elementary proof of the group axioms for
hyperbolic addition on Edwards curves (Theorem~\ref{thm:group}).  In
this section, we start afresh, shifting away from a geometric language
and work entirely algebraically over an arbitrary field $k$.

\subsection{rings and homomorphisms}

We will assume a basic background in abstract algebra at the level of
a first course (rings, fields, homomorphisms, and kernels).  We set
things up in a way that all of the main identities to be proved are
identities of polynomials with integer coefficients.

If $R$ is a ring (specifically, a ring of polynomials with integer
coefficients), and if $\delta\in R$, then we write $R[\f{\delta}]$ for
the localization of $R$ with respect to the multiplicative set
$S=\{1,\delta,\delta^2,\ldots\}$.  That is, $R[\f{\delta}]$ is the
ring of fractions with numerators in $R$ and denominators in $S$.  We
will need the well-known fact that if $\phi:R\to A$ is a ring
homomorphism that sends $\delta$ to a unit in $A$, then $\phi$ extends
uniquely to a homomorphism $R[\f{\delta}]\to A$ that maps a fraction
$r/\delta^i$ to $\phi(r)\phi(\delta^i)^{-1}$.

\begin{lemma}[kernel property] Suppose that an identity $r = r_1 e_1 +
  r_2 e_2 +\cdots + r_k e_k$ holds in a ring $R$.  If $\phi:R\to A$ is
  a ring homomorphism such that $\phi(e_i) =0$ for all $i$, then
  $\phi(r)=0$.
\end{lemma}

\begin{proof}
$\phi(r) = \sum_{i=1}^k \phi(r_i) \phi(e_i) = 0.$
\end{proof}

We use the following rings: $R_0 := \ring{Z}[c,d]$ and $R_n :=
R_0[x_1,y_1,\ldots,x_n,y_n]$.  We reintroduce the polynomial for the
Edwards curve.  Let
\begin{equation}
e(x,y) = x^2 + c y^2 -1 - d x^2 y^2 \in  R_0[x,y].
\end{equation}

We write $e_i = e(x_i,y_i)$ for the image of the polynomial in $R_j$,
for $i\le j$, under $x\mapsto x_i$ and $y\mapsto y_i$.  Set
$\delta_x = \delta^-$ and $\delta_y = \delta^+$, where
\[\delta^{\pm} (x_1,y_1,x_2,y_2) = 1\pm d x_1 y_1 x_2 y_2\] and
\[
\delta(x_1,y_1,x_2,y_2) = \delta_x\delta_y\in R_2.
\]
We write $\delta_{ij}$ for its image of $\delta$ under
$(x_1,y_1,x_2,y_2)\mapsto (x_i,y_i,x_j,y_j)$.  So,
$\delta=\delta_{12}$.

\subsection{inverse and closure}

We write $z_i = (x_i,y_i)$.
Borrowing the definition of hyperbolic addition from the previous
section, we define a pair of rational functions that we denote using
the symbol $\oplus$:
\begin{equation}\label{eqn:add}
z_1 \oplus z_2 =  \left(\frac{x_1 x_2 - c y_1 y_2}{1 - d x_1 x_2 y_1 y_2},
\frac{x_1 y_2 + y_1 x_2}{1+d x_1 x_2 y_1 y_2}\right) 
\in R_2[\f{\delta}]\times R_2[\f{\delta}].
\end{equation}
Commutativity is a consequence of the subscript symmetry
$1\leftrightarrow 2$ evident in the pair of rational functions:
\[
z_1 \oplus z_2 = z_2\oplus z_1.
\]
If $\phi:R_2[\f{\delta}]\to A$ is a ring homomorphism, we also write
$P_1\oplus P_2\in A^2$ for the image of $z_1\oplus z_2$.  We write
$e(P_i)\in A$ for the image of $e_i=e(z_i)$ under $\phi$.  We often
mark the image $\bar r=\phi(r)$ of an element with a bar accent.

There is an obvious identity element $(1,0)$, expressed as follows.
Under a homomorphism $\phi:R_2[\f{\delta}]\to A$, mapping $z_1\mapsto
P$ and $z_2\mapsto \iota\,P$, we have
\begin{equation}
P\oplus(1,0) = P.
\end{equation}

\begin{lemma} [inverse] 
  Let $\phi:R_2[\f{\delta}]\to A$, with $z_1\mapsto P$, $z_2\mapsto
  \iota(P)$.  If $e(P)=0$, then $P\oplus \iota(P) = (1,0)$.
\end{lemma}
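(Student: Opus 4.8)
The plan is to compute $P \oplus \iota(P)$ directly from the defining formula (\ref{eqn:add}) and show that, modulo the relation $e(P)=0$, it equals the identity $(1,0)$. Writing $P = (x,y)$, so that $\iota(P) = (x,-y)$, I substitute $z_1 = (x,y)$ and $z_2 = (x,-y)$ into (\ref{eqn:add}). The $y$-coordinate of the sum has numerator $x_1 y_2 + y_1 x_2 = x(-y) + y x = 0$, so the second coordinate is identically $0$ regardless of any curve relation, provided the denominator $\delta^+ = 1 + d x^2 (-y) y = 1 - d x^2 y^2$ is sent to a unit in $A$ (which is guaranteed since $\phi$ is defined on $R_2[\f{\delta}]$, so $\bar\delta = \bar\delta_x \bar\delta_y$ is a unit, hence each factor is). The work is therefore concentrated entirely in the first coordinate.

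For the first coordinate, the numerator is $x_1 x_2 - c y_1 y_2 = x^2 - c(-y)y = x^2 + c y^2$, and the denominator $\delta^-$ becomes $1 - d x^2 (-y) y = 1 + d x^2 y^2$. So I must show that in $A$ the identity $\overline{x^2 + c y^2} = \overline{1 + d x^2 y^2}$ holds, i.e. that $\overline{(x^2 + c y^2)} - \overline{(1 + d x^2 y^2)} = 0$. But this difference is exactly the image of the Edwards polynomial: $x^2 + c y^2 - 1 - d x^2 y^2 = e(x,y)$. Invoking the kernel property (the preceding lemma) with the trivial identity $e(P) = 1\cdot e(P)$, or simply using $\phi(e(P)) = e(P) = 0$ by hypothesis, gives that the numerator and denominator of the first coordinate have equal image. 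Hence the first coordinate equals $\bar\delta_x \bar\delta_x^{-1} = 1$ in $A$, and we conclude $P \oplus \iota(P) = (1,0)$.

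The one point requiring care — and the only place a subtlety could hide — is the handling of the localization. I should verify that when $z_2 = \iota(z_1)$ the element $\delta = \delta_x \delta_y$ does indeed map to a unit under $\phi$, so that the fractions in (\ref{eqn:add}) are well-defined in the target ring $A$. Here $\bar\delta_x = \overline{1 - d x^2 y^2}$ and $\bar\delta_y = \overline{1 + d x^2 y^2}$ coincide after the substitution $z_2 = \iota(z_1)$ with $\overline{1 + d x^2 y^2}$ and $\overline{1 - d x^2 y^2}$ respectively; since $\phi$ is a homomorphism out of $R_2[\f{\delta}]$ it sends $\delta$ to a unit by definition of the localization, and a product being a unit forces each factor to be a unit. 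Thus no division-by-zero issue arises. I do not expect any genuine obstacle: the computation is short, the $y$-coordinate vanishes formally, and the $x$-coordinate reduces to the curve equation in a single step via the kernel property.
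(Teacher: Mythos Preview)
Your proof is correct and follows exactly the approach the paper takes: plug $P=(a,b)$ and $\iota(P)=(a,-b)$ into the addition formula (\ref{eqn:add}), observe the $y$-numerator vanishes identically, and note that the $x$-numerator minus the $x$-denominator is precisely $e(P)$. You have simply written out in detail what the paper compresses into one line, including the (harmless) verification that the denominators are units, which is already built into the hypothesis that $\phi$ is defined on $R_2[\f{\delta}]$.
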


\begin{proof} Plug $P=(a,b)$ and $\iota\,P=(a,-b)$ into
  (\ref{eqn:add}) and simplify using $e(P)=0$.
\end{proof}

\begin{lemma}[closure under addition]\label{lemma:closure}
  Let $\phi:R_2[\f{\delta}]\to A$ with $z_i\mapsto P_i$.  If
  $e(P_1)=e(P_2)=0$, then
  \[
  e(P_1 \oplus P_2) = 0.
  \]
\end{lemma}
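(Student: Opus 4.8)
The plan is to clear denominators and reduce the claim to a single polynomial identity in $R_2$, which is then discharged by the kernel property. Write $N_X = x_1x_2 - cy_1y_2$ and $N_Y = x_1y_2 + y_1x_2$ for the two numerators in (\ref{eqn:add}), so that $P_1\oplus P_2$ is the image under $\phi$ of $(N_X/\delta_x,\,N_Y/\delta_y)$. Note that $\delta_x$ and $\delta_y$ are themselves units in $R_2[\f{\delta}]$, with inverses $\delta_y/\delta$ and $\delta_x/\delta$, so these expressions make sense. Since $e$ has degree two in each of its two variables, multiplying $e(N_X/\delta_x,\,N_Y/\delta_y)$ by the common denominator $\delta^2=\delta_x^2\delta_y^2$ produces an element
\[
E := N_X^2\,\delta_y^2 + c\,N_Y^2\,\delta_x^2 - \delta_x^2\delta_y^2 - d\,N_X^2 N_Y^2 \in R_2,
\]
with the property that $\phi(E) = \phi(\delta)^2\,e(P_1\oplus P_2)$ in $A$.

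First I would establish the polynomial identity
\[
E = r_1 e_1 + r_2 e_2
\]
in $R_2$, for explicit cofactors $r_1,r_2\in R_2$. Because $N_X$, $N_Y$, $\delta_x$, and $\delta_y$ are all invariant under the exchange $(x_1,y_1)\leftrightarrow(x_2,y_2)$, the polynomial $E$ is symmetric while $e_1$ and $e_2$ are swapped; hence it suffices to produce $r_1$ and take $r_2$ to be its image under the same exchange, halving the work. The cofactors are located by dividing $E$ by $e_1$ and $e_2$, regarded as polynomials over $\ring{Z}[c,d][x_1,y_1,x_2,y_2]$, and verifying that the remainder is zero; this is exactly the direct polynomial division on which the paper's method rests.

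Granting the identity, the conclusion is immediate. Applying $\phi$ and using the kernel property gives $\phi(E) = \phi(r_1)\,e(P_1) + \phi(r_2)\,e(P_2) = 0$, since $e(P_1)=e(P_2)=0$ by hypothesis. On the other hand $\phi(E) = \phi(\delta)^2\,e(P_1\oplus P_2)$, and $\phi(\delta)$ is a unit in $A$ because $\phi$ is a homomorphism out of the localization $R_2[\f{\delta}]$. Multiplying by $\phi(\delta)^{-2}$ yields $e(P_1\oplus P_2)=0$, as required.

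The main obstacle is the explicit verification of the ideal membership $E\in(e_1,e_2)$, that is, producing and checking the cofactors. A useful structural simplification is that the mixed terms collapse: setting $t=x_1y_1x_2y_2$, one computes $N_X^2 + cN_Y^2 = (x_1^2+cy_1^2)(x_2^2+cy_2^2)$, after which the curve relations $x_i^2+cy_i^2 \equiv 1+dx_i^2y_i^2 \pmod{(e_1,e_2)}$ substitute in directly. Organizing $E$ around the combinations $\delta_y^2+\delta_x^2 = 2(1+d^2t^2)$ and $\delta_y^2-\delta_x^2 = 4dt$ reduces the remaining work to bookkeeping of powers of $t$, and I expect the bulk of the routine computation to lie there.
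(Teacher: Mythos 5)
Your proposal is correct and follows essentially the same route as the paper: clear the denominators to obtain a polynomial numerator (your $E$ is exactly the paper's $r$ with $e(z_1\oplus z_2)=r/\delta^2$), certify the ideal membership $E=r_1e_1+r_2e_2$ by explicit polynomial division, and conclude via the kernel property together with the invertibility of $\phi(\delta)$. The extra remarks on symmetry and on the identity $N_X^2+cN_Y^2=(x_1^2+cy_1^2)(x_2^2+cy_2^2)$ are sensible organizational aids but do not change the argument.
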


\begin{proof} This proof serves as a model for several proofs that are
  based on multivariate polynomial division.  We write
\[
e(z_1\oplus z_2) = \frac{r}{\delta^2},
\]
for some polynomial $r \in R_2$.  It is enough to show that
$\phi(r)=0$.  Polynomial division gives
\begin{equation}\label{eqn:closure}
r= r_1 e_1 + r_2 e_2,
\end{equation}
for some polynomials $r_i\in R_2$.  Concretely, the polynomials $r_i$
are obtained as the output of the one-line Mathematica command
\[
\op{PolynomialReduce}[r,\{e_1,e_2\},\{x_1,x_2,y_1,y_2\}].
\]
The result now follows from the kernel property and
(\ref{eqn:closure}); $ e(P_1) = e(P_2) = 0$ implies $\phi(r)= 0$,
giving ${e}(P_1\oplus P_2)=0$.
\end{proof}

Although the documentation is incomplete, {\tt PolynomialReduce} seems
to be an implementation of a naive multivariate division algorithm
such as \cite{cox1992ideals}.  In particular, our approach does not
require the use of Gr\"obner bases (except in Lemma \ref{lemma:noco}
where they make an easily avoidable appearance).  We write
\[
r \equiv r' \mod S,
\]
where $r-r'$ is a rational function and $S$ is a set of polynomials,
to indicate that the numerator of $r-r'$ has zero remainder when
reduced by polynomial division\footnote{Our computer algebra
  calculations are available at \url{www.github.com/flyspeck}.  This
  includes a formal verification in HOL Light of key polynomial
  identities.} with respect to $S$ using {\tt PolynomialReduce}.  We
also require the denominator of $r-r'$ to be invertible in the
localized polynomial ring.  The zero remainder will give
$\phi(r)=\phi(r')$ in each application.  We extend the notation to
$n$-tuples
\[
(r_1,\ldots,r_n) \equiv (r_1',\ldots,r_n') \mod S,
\]
to mean $r_i \equiv r_i' \mod S$ for each $i$.  Using this approach,
most of the proofs in this article almost write themselves.

\subsection{associativity}

This next step (associativity) is generally considered the hardest
part of the verification of the group law on curves.  Our proof is two
lines and requires little more than polynomial division.  The
polynomials $\delta_x,\delta_y$ appear as denominators in the addition
rule.  The polynomial denominators $\Delta_x,\Delta_y$ that appear
when we add twice are more involved.  Specifically, let $
(x_3',y_3')=(x_1,y_1) \oplus (x_2,y_2)$, let $(x_1',y_1')=(x_2,y_2)
\oplus (x_3,y_3) $, and set
\[
\Delta_x = \delta_x(x_3',y_3',x_3,y_3)
\delta_x(x_1,y_1,x_1',y_1')\delta_{12}\delta_{23}\in R_3.
\]
Define $\Delta_y$ analogously.

\begin{lemma}[generic associativity] \label{lemma:assoc} Let
  $\phi:R_3[\f{\Delta_x\Delta_y}]\to A$ be a homomorphism with
  $z_i\mapsto P_i$.  If $e(P_1)=e(P_2)=e(P_3)=0$, then
\[
(P_1 \oplus P_2)\oplus P_3 = 
P_1 \oplus (P_2\oplus P_3).
\]
\end{lemma}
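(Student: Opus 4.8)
The plan is to imitate the template established in the proof of Lemma~\ref{lemma:closure}, now applied to the pair of rational functions expressing the difference of the two triple sums.  Writing $L = (P_1\oplus P_2)\oplus P_3$ and $R = P_1\oplus(P_2\oplus P_3)$, I would first assemble $L$ and $R$ as elements of $R_3[\f{\Delta_x\Delta_y}]^2$ by substituting the addition formula (\ref{eqn:add}) into itself.  The outer application of $\oplus$ introduces denominators of the form $\delta^{\pm}$ evaluated at the intermediate sums, so clearing all denominators shows that each coordinate of $L-R$ can be written as $r/\Delta$ for a single polynomial $r\in R_3$ (one such $r$ for the $x$-coordinate, one for the $y$-coordinate), where $\Delta$ is a product of the relevant $\delta$-factors dividing $\Delta_x$ or $\Delta_y$.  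The point of defining $\Delta_x,\Delta_y$ as those specific products is exactly to guarantee that these denominators become units under $\phi$, so that it suffices to prove that the numerators vanish.

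Next I would reduce the problem to a polynomial identity over the integers.  By the kernel property, it is enough to exhibit a representation
\[
r = r_1 e_1 + r_2 e_2 + r_3 e_3
\]
for each numerator $r$ (that is, for both the $x$- and $y$-coordinate numerators of $L-R$), with coefficients $r_i\in R_3$.  Since $e(P_1)=e(P_2)=e(P_3)=0$ by hypothesis and $\phi$ is a homomorphism, applying $\phi$ then forces $\phi(r)=0$, hence each coordinate of $L-R$ maps to $0$ in $A$, giving $L=R$.  Concretely, the coefficients $r_i$ are produced by the single command
\[
\op{PolynomialReduce}[r,\{e_1,e_2,e_3\},\{x_1,y_1,x_2,y_2,x_3,y_3\}],
\]
and the claim is simply that this division leaves zero remainder; in the notation introduced above, this is the assertion that both coordinates of $L-R$ are $\equiv 0 \mod \{e_1,e_2,e_3\}$.

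The main obstacle is not conceptual but computational: one must verify that the remainder really is zero after reducing modulo $e_1,e_2,e_3$.  The numerators $r$ here are genuinely large polynomials in the eight variables $c,d,x_1,y_1,x_2,y_2,x_3,y_3$, since the twofold substitution into (\ref{eqn:add}) produces high-degree expressions before simplification.  The delicate point is bookkeeping rather than invention: I must make sure that every denominator appearing after clearing fractions genuinely divides $\Delta_x$ (for the $x$-coordinate) or $\Delta_y$ (for the $y$-coordinate), so that the reduction to "numerator vanishes" is legitimate and $\phi$ is actually defined on the relevant localized ring.  Once the denominators are correctly identified with the factors of $\Delta_x\Delta_y$, the vanishing of the remainder is a finite mechanical check that, as the author emphasizes, "almost writes itself" and can be confirmed by polynomial division and independently formally verified.
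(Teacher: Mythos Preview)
Your proposal is correct and follows exactly the same approach as the paper: reduce the two coordinates of $((x_1,y_1)\oplus(x_2,y_2))\oplus(x_3,y_3)-(x_1,y_1)\oplus((x_2,y_2)\oplus(x_3,y_3))$ to numerators in $R_3$ and verify by \texttt{PolynomialReduce} that each lies in the ideal $(e_1,e_2,e_3)$, then invoke the kernel property. The paper's own proof is simply the one-line statement of this congruence modulo $\{e_1,e_2,e_3\}$ in $R_3[\f{\Delta_x\Delta_y}]$; you have spelled out the same computation in the template of Lemma~\ref{lemma:closure}.
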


\begin{proof} By polynomial division in the
  ring $R_3[\f{\Delta_x\Delta_y}]$
\[
((x_1,y_1)\oplus (x_2,y_2)) \oplus (x_3,y_3)\equiv
(x_1,y_1)\oplus ((x_2,y_2) \oplus (x_3,y_3)) \mod \{e_1,e_2,e_3\}.
\]
\end{proof}

\subsection{group law for affine curves}

\begin{lemma}[affine closure] \label{lemma:affine} Let $\phi:R_2\to k$
  be a homomorphism into a field $k$.  If
  $\phi(\delta)=e(P_1)=e(P_2)=0$, then either $\bar d$ or $\bar c \bar
  d$ is a nonzero square in $k$.
\end{lemma}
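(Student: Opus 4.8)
The plan is to work out what the hypotheses $\phi(\delta) = 0$ and $e(P_1) = e(P_2) = 0$ force algebraically. Write $P_1 = (a_1, b_1)$ and $P_2 = (a_2, b_2)$ in the field $k$, and recall $\delta = \delta_x \delta_y = (1 - \bar d\, a_1 b_1 a_2 b_2)(1 + \bar d\, a_1 b_1 a_2 b_2)= 1 - \bar d^{\,2} a_1^2 b_1^2 a_2^2 b_2^2$. So $\phi(\delta) = 0$ says exactly that $\bar d^{\,2}(a_1 b_1 a_2 b_2)^2 = 1$, i.e.\ $\bar d\, a_1 b_1 a_2 b_2 = \pm 1$. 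In particular $\bar d \neq 0$ and none of $a_1, b_1, a_2, b_2$ vanishes. This already gives the product $\bar d\,(a_1 b_1 a_2 b_2)^2 = \bar d^{-1}$ as a square times $\bar d$, which is the kind of relation I want to massage into the statement.

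Next I would use the two curve equations to pin down the coordinates. From $e(P_i) = 0$ we have $a_i^2 + \bar c\, b_i^2 - 1 - \bar d\, a_i^2 b_i^2 = 0$, which I would solve for $a_i^2$ (or for $b_i^2$) to express one coordinate's square in terms of the other. The goal is to feed these relations into the identity $\bar d^{\,2}(a_1 b_1 a_2 b_2)^2 = 1$ and show the resulting expression exhibits either $\bar d$ or $\bar c \bar d$ as a nonzero square. The cleanest route is probably to treat the two equations $e_1 = 0$ and $e_2=0$ as a system and look for a symmetric combination: since $\bar d\, a_1 b_1 a_2 b_2 = \pm 1$ couples the two points multiplicatively, I expect that after substitution the square root being extracted is a product like $(a_1 b_2 \pm b_1 a_2)$ or $(a_1 a_2 \pm \bar c\, b_1 b_2)$, whose square can be computed from $e_1, e_2$ and shown to equal $\bar d^{-1}$ or $\bar c^{-1}\bar d^{-1}$ up to squares.

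Concretely, I would compute the square of a candidate expression and reduce it modulo $e_1$ and $e_2$ together with the constraint $\bar d\, a_1 b_1 a_2 b_2 = \pm 1$, exactly in the spirit of the polynomial-division arguments used earlier in the paper. The expected dichotomy corresponds to the two signs: when $\bar d\,a_1 b_1 a_2 b_2 = +1$ one combination is forced to be a square root of (a unit multiple of) $\bar d$, and when the sign is $-1$ the other combination works, producing $\bar c \bar d$ instead. Because we have already established $\bar d \neq 0$ and all coordinates nonzero, whichever square appears is automatically nonzero, so I only need to verify it is genuinely a square rather than chase nondegeneracy separately.

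The main obstacle I anticipate is identifying the correct combinations $u, v$ of the coordinates whose squares collapse to $\bar d^{-1}$ and $(\bar c\bar d)^{-1}$ under the relations, and confirming that the two sign cases of $\phi(\delta_x) = 0$ versus $\phi(\delta_y) = 0$ really do map onto the $\bar d$ versus $\bar c\bar d$ alternatives rather than both yielding the same factor. Since $\delta = \delta_x \delta_y$, the hypothesis $\phi(\delta) = 0$ splits into $\phi(\delta_x) = 0$ or $\phi(\delta_y) = 0$, and I expect these two cases to drive the dichotomy directly: one gives $\bar d\, a_1 b_1 a_2 b_2 = 1$ and the other gives $\bar d\, a_1 b_1 a_2 b_2 = -1$, so the case analysis is forced by which factor of $\delta$ vanishes rather than being an artificial split. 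Getting the algebra of these two cases to land exactly on the stated two squares, with signs tracked correctly, is the one delicate point; everything else is substitution and the kernel property already in hand.
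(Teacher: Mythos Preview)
Your plan is essentially the classical Bernstein--Lange hand computation and can be pushed through, but it is a different route from the paper's, and one of your structural expectations is wrong.

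The paper does no sign case-split at all. It simply exhibits the polynomial
\[
r \;=\; (1 - c d\, y_1^2 y_2^2)\,(1 - d\, y_1^2 x_2^2)
\]
and verifies by polynomial division that $r \equiv 0 \bmod \{\delta, e_1, e_2\}$ in $R_2$. Then $\phi(r)=0$ in $k$, so one factor vanishes: either $\bar c\bar d = (b_1 b_2)^{-2}$ or $\bar d = (b_1 a_2)^{-2}$, each manifestly a nonzero square. That is the entire argument --- no search for the ``right'' combinations $u,v$, no branching on which of $\delta_x,\delta_y$ vanishes.

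The part of your outline that would not survive contact with the algebra is the claim that $\phi(\delta_x)=0$ versus $\phi(\delta_y)=0$ map one-to-one onto ``$\bar d$ is a square'' versus ``$\bar c\bar d$ is a square.'' Specialize to $c=1$ to see the problem: the two conclusions then coincide, yet both sign cases still occur. Concretely, the Bernstein--Lange identity in that case is
\[
\bar d\,\bigl(a_1 b_1 (a_2 + b_2)\bigr)^2 \;=\; (a_1 + \epsilon\, b_1)^2,
\qquad \epsilon = \bar d\, a_1 b_1 a_2 b_2 \in \{\pm 1\},
\]
so the sign $\epsilon$ sits \emph{inside} the square on the right rather than choosing between $\bar d$ and $\bar c\bar d$ on the left, and one still has to handle the sub-case $a_2+b_2=0$ separately. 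For general $c$ the pattern is similar but with more branching. Your approach does reach the goal, just with a different (and larger) case tree than you forecast; the paper's single polynomial identity does it in one stroke.
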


The lemma is sometimes called completeness, in conflict with the
definition of complete varieties in algebraic geometry.  To avoid
possible confusion, we avoid this terminology.  We use the lemma in
contrapositive form to give conditions on $\bar d$ and $\bar c\bar d$
that imply $\phi(\delta)\ne0$.

\begin{proof} 
  Let $r = (1 - c d y_1^2 y_2 ^2) (1 - d y_1^2 x_2^2)$.  By polynomial
  division,
\begin{equation}\label{eqn:squares}
  r \equiv 0 \mod \{\delta,e_1,e_2\}.
\end{equation}
This forces $\phi(r)=0$, which by the form of $r$ implies that $\bar
c\bar d$ or $\bar d$ is a nonzero square.
\end{proof}

We are ready to state and prove one of the main results of this
article.  This ``ellipstic'' group law is expressed generally enough
to include the group law on the circle and ellipse as a special case
$\bar d = 0$.

\begin{theorem}[group law]\label{thm:group} 
  Let $k$ be a field, let $\bar c \in k$ be a square, and let $\bar
  d\not\in k^{\times 2}$.  
  Then 
  \[
  C= \{P\in k^2 \mid  e(P) = 0\}
  \]
   is an abelian
  group with binary operation $\oplus$.
\end{theorem}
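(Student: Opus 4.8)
The plan is to check the four group axioms (closure, associativity, identity, inverses) together with commutativity, and to observe that each is already supplied by one of the preceding lemmas once the relevant denominators are known to be nonzero. The single genuine issue is therefore \emph{well-definedness}: the operation $\oplus$ is given by rational functions whose denominators are powers of $\delta^{\pm}$, and the associativity lemma additionally requires the denominators $\Delta_x,\Delta_y$ to be invertible. All of this nonvanishing will be extracted from the affine closure lemma (Lemma~\ref{lemma:affine}), used in its contrapositive form.

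First I would record that the hypotheses force both $\bar d\notin k^{\times 2}$ and $\bar c\bar d\notin k^{\times 2}$. Indeed $\bar d\notin k^{\times 2}$ is assumed; and since $\bar c$ is a square, if $\bar d=0$ or $\bar c=0$ then $\bar c\bar d=0\notin k^{\times 2}$, while if both are nonzero then $\bar c\in k^{\times 2}$ makes $\bar c\bar d$ a square exactly when $\bar d$ is, which it is not. Hence neither $\bar d$ nor $\bar c\bar d$ is a nonzero square, so Lemma~\ref{lemma:affine}, read contrapositively, yields $\phi(\delta)\neq 0$ whenever $e(P_1)=e(P_2)=0$. Consequently, for any $P_1,P_2\in C$ the homomorphism $R_2\to k$ sending $z_i\mapsto P_i$ sends $\delta=\delta^+\delta^-$ to a unit and extends to $R_2[\f{\delta}]\to k$; thus $P_1\oplus P_2\in k^2$ is defined. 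Closure under addition (Lemma~\ref{lemma:closure}) then gives $e(P_1\oplus P_2)=0$, i.e.\ $P_1\oplus P_2\in C$, so $\oplus$ is a binary operation on $C$. Commutativity is the evident $1\leftrightarrow 2$ symmetry of (\ref{eqn:add}). The identity $(1,0)$ lies in $C$ since $e(1,0)=0$, and $P\oplus(1,0)=P$ holds as stated. Finally $\iota(P)\in C$ because $e(x,-y)=e(x,y)$, and since $\delta(P,\iota P)\neq 0$ by affine closure the inverse lemma applies to give $P\oplus\iota(P)=(1,0)$.

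The main obstacle is associativity, because the generic associativity lemma (Lemma~\ref{lemma:assoc}) is stated only for homomorphisms out of $R_3[\f{\Delta_x\Delta_y}]$, so I must show $\overline{\Delta_x}\,\overline{\Delta_y}\neq 0$ at any triple $P_1,P_2,P_3\in C$. Here the two lemmas combine: by the closure step the intermediate sums $z_3'=P_1\oplus P_2$ and $z_1'=P_2\oplus P_3$ again lie on $C$, and each factor appearing in $\Delta_x$ and $\Delta_y$ — namely $\delta_{12}$, $\delta_{23}$, and the $\delta^{\pm}$ formed from the pairs $(z_3',P_3)$ and $(P_1,z_1')$ — is a value of $\delta^+$ or $\delta^-$ on a pair of points of $C$. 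Every such factor is nonzero by affine closure, so $\overline{\Delta_x}$ and $\overline{\Delta_y}$ are nonzero; hence $\phi$ extends to $R_3[\f{\Delta_x\Delta_y}]\to k$ and Lemma~\ref{lemma:assoc} yields $(P_1\oplus P_2)\oplus P_3=P_1\oplus(P_2\oplus P_3)$. With all five properties verified, $(C,\oplus)$ is an abelian group.
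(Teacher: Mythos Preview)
Your proof is correct and follows essentially the same route as the paper's: assemble the group axioms from the preceding lemmas, with the only real work being the repeated application of the affine closure lemma to guarantee that $\phi(\Delta_x\Delta_y)\ne 0$ so that Lemma~\ref{lemma:assoc} applies. You are in fact more explicit than the paper in two places the paper leaves implicit: the verification that $\bar c\bar d\notin k^{\times 2}$ follows from the hypotheses, and the observation that each factor of $\Delta_x,\Delta_y$ is a $\delta^{\pm}$ evaluated on a pair of curve points (using closure for the intermediate sums), hence nonzero.
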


\begin{proof} This follows directly from the earlier results.  For
  example, to check associativity of $P_1\oplus P_2\oplus P_3$, where
  $P_i\in C$, we define a homomorphism $\phi:R_3\to k$ sending
  $z_i\mapsto P_i$ and $(c,d)\mapsto (\bar c,\bar d)$.  By a repeated
  use of the affine closure lemma, $\phi(\Delta_y\Delta_x)$ is nonzero
  and invertible in the field $k$.  The universal property of
  localization extends $\phi$ to a homomorphism
  $\phi:R_3[\f{\Delta_y\Delta_x}]\to k$.  By the associativity lemma
  applied to $\phi$, we obtain the associativity for these three
  (arbitrary) elements of $C$.  The other properties follow similarly
  from the lemmas on closure, inverse, and affine closure.
\end{proof}

The Mathematica calculations in this section are fast. For example,
the associativity certificate takes about $0.12$ second to compute on
a 2.13 GHz processor.  Once the Mathematica code was in final form, it
took less than 30 minutes of development time in HOL Light to copy the
polynomial identities over to the proof assistant and formally verify
them.  All the polynomial identities in this section combined can be
formally verified in less than 2 seconds. The most difficult formal
verification is the associativity identity which takes about 1.5
seconds.

Working with the Weierstrass form of the curve, Friedl was the first
to give a proof of the associative law of elliptic curves in a
computer algebra system (in Cocoa using Gr\"obner bases)
\cite{friedl}.  He writes, ``The verification of some identities took
several hours on a modern computer; this proof could not have been
carried out before the 1980s.''  These identities were eventually
formalized in Coq with runtime 1 minute and 20 seconds
\cite{thery2007proving}.  A non-computational Coq
formalization based on the Picard group appears in
\cite{bartzia2014formal}.  By shifting to Edwards curves, we have
eliminated case splits and
significantly improved the speed of the computational proof.

\section{Group law for projective Edwards curves}

By proving the group laws for a large class of elliptic curves,
Theorem \ref{thm:group} is sufficiently general for many applications
to cryptography.  Nevertheless, to achieve complete generality, we
push forward.

This section show how to remove the restriction $\bar d\not\in
k^{\times 2}$ that appears in the group law in the previous section.
By removing this restriction, we obtain a new proof of the group law
for all elliptic curves in characteristics different from $2$.
Unfortunately, in this section, some case-by-case arguments are
needed, but no hard cases are hidden from the reader.  The level of
exposition here is less elementary than in the previous section.

The basic idea of our construction is that the projective
curve $E$ is obtained by gluing two affine curves $\Eaff$
together.  The associative property for $E$ is a consequence
of the associative property on affine pieces $\Eaff$, which
can be expressed as polynomial identities.

\subsection{definitions}

In this section, we assume that $c\ne 0$ and that $c$ and $d$ are both
squares.  Let $t^2 = d/c$.  By a change of variable $y\mapsto
y/\sqrt{c}$, the Edwards curve takes the form
\begin{equation}\label{eqn:t}
e(x,y)= x^2 + y^2 -1 - t^2 x^2 y^2.
\end{equation}

We assume $t^2\ne 1$.  Note if $t^2=1$, then Equation (\ref{eqn:t})
becomes
\[
-(1-x^2)(1-y^2),
\]
and the curve degenerates to a product of intersecting lines, which
cannot be a group.  We also assume that $t\ne 0$, which excludes the
circle, which has already been fully treated.  Shifting notation for
this new setting, let
\[
R_0 = \ring{Z}[t,\frac{1}{t^2-1},\frac1t],\quad
R_n = R_0[x_1,y_1,\ldots,x_n,y_n].
\]
As before, we write $e_i = e(z_i)$, $z_i=(x_i,y_i)$, and $ e(P_i) =
\phi(e_i)$ when a homomorphism $\phi$ is given.

Define rotation by $\rho(x,y)=(-y,x)$ and inversion $\tau$ by
\[
\tau(x,y) = (1/(tx),1/(ty)).
\]
Let $G$ be the abelian group of order eight generated by $\rho$ and
$\tau$.

\subsection{extended addition}

We extend the binary operation $\oplus$ using the automorphism $\tau$.
We modify notation slightly to write $\oplus_0$ for the binary
operation denoted $\oplus$ until now.  We also write $\delta_1$ for
$\delta$, $\nu_1$ for $\nu$ and so forth.

Set
\begin{equation}\label{eqn:tauplus}
z_1\oplus_1 z_2 := \tau((\tau z_1)\oplus_0 z_2)= \left(\frac{x_1y_1 - x_2 y_2}{x_2
    y_1-x_1 y_2},\frac{x_1 y_1 + x_2 y_2}{x_1 x_2 + y_1 y_2}\right) 
= (\frac{\nu_{1x}}{\delta_{1x}},\frac{\nu_{2x}}{\delta_{2x}})
\end{equation}
in $R_2[\f{\delta_1}]^2$ where $\delta_1 = \delta_{1x}\delta_{1y}$.

We have the following easy identities of rational functions that are
proved by simplification of rational functions:

\noindent{\it inversion invariance}
\begin{align}\label{eqn:r-tau}
\tau (z_1)\oplus_i z_2 &= z_1 \oplus_i \tau z_2;
\end{align}
\noindent{\it rotation invariance}
\begin{align}\label{eqn:r-rho}
\begin{split}
\rho(z_1)\oplus_i z_2 &= \rho(z_1\oplus_i z_2);\\
\delta_i(z_1,\rho z_2) &= \pm \delta_i(z_1,z_2);
\end{split}
\end{align}
\noindent{\it inverse rules  for $\sigma=\tau,\rho$}
\begin{align}\label{eqn:r-iota}
\begin{split}
\iota \sigma(z_1) &= \sigma^{-1} \iota (z_1);\\
\iota (z_1\oplus_i z_2) &= (\iota z_1)\oplus_i (\iota z_2).
\end{split}
\end{align}

The following coherence rule and closure hold by polynomial division:
\begin{align}\label{eqn:r-coh}
\begin{split}
z_1 \oplus_0 z_2 \equiv z_1 \oplus_1 z_2 &\mod \{e_1,e_2\};\\
e(z_1\oplus_1 z_2) \equiv 0 &\mod \{e_1,e_2\}.
\end{split}
\end{align}
The first identity requires inverting $\delta_0\delta_1$ and the
second requires inverting $\delta_1$.

\subsection{projective curve and dichotomy}

Let $k$ be a field of characteristic different from two.  We let
$\Eaff$ be the set of zeros of Equation (\ref{eqn:t}) in $k^2$.  Let
$\Eoo\subset \Eaff$ be the subset of $\Eaff$ with nonzero coordinates
$x,y\ne0$.

We construct the projective Edwards curve $E$ by taking two copies of
$\Eaff$, glued along $\Eoo$ by isomorphism $\tau$.  We write $[P,i]\in
E$, with $i\in \ring{Z}/2\ring{Z}=\ring{F}_2$, for the image of $P\in
\Eaff$ in $E$ using the $i$th copy of $\Eaff$.  The gluing condition
gives for $P\in \Eoo$:
\begin{equation}\label{eqn:glue}
[P,i]=[\tau P,i+1].
\end{equation}

The group $G$ acts on the set $E$, specified on generators $\rho,\tau$
by $\rho[P,i]=[\rho(P),i]$ and $\tau[P,i]=[P,i+1]$.

We define addition on $E$ by
\begin{equation}\label{eqn:add-proj}
[P,i]\oplus [Q,j] = [P\oplus_\ell Q,i+j],\quad 
\text{if } \delta_\ell(P,Q)\ne 0,\quad \ell\in\ring{F}_2
\end{equation}
We will show that the addition is well-defined, is defined for all
pairs of points in $E$, and that it gives a group law with identity
element $[(1,0),0]$.  The inverse is $[P,i]\mapsto [\iota P,i]$, which
is well-defined by the inverse rules (\ref{eqn:r-iota}).

\begin{lemma} \label{lemma:no-fix} $G$ acts without fixed point on
  $\Eoo$.  That is, $g P = P$ implies that $g=1_G\in G$.
\end{lemma}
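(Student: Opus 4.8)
The plan is to enumerate the seven nontrivial elements of $G$ and rule out a fixed point in $\Eoo$ for each, using only that $k$ has characteristic $\neq 2$, that $t\neq 0$ and $t^2\neq 1$, and that a point $P=(x,y)\in\Eoo$ satisfies $x,y\neq 0$ together with $e(x,y)=x^2+y^2-1-t^2x^2y^2=0$. First I would record the structure of $G$: since $\rho(x,y)=(-y,x)$ gives $\rho^2(x,y)=(-x,-y)$ and $\rho^4=\mathrm{id}$, while $\tau^2=\mathrm{id}$, and since $\rho$ and $\tau$ commute, $G=\langle\rho\rangle\times\langle\tau\rangle\cong \ring{Z}/4\times\ring{Z}/2$, with nontrivial elements $\rho,\rho^2,\rho^3,\tau,\rho\tau,\rho^2\tau,\rho^3\tau$.

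The pure rotations are immediate. For $g=\rho^2$ the equation $(-x,-y)=(x,y)$ forces $2x=0$, hence $x=0$ since $\mathrm{char}\,k\neq 2$, contradicting $P\in\Eoo$. For $g=\rho$ (respectively $g=\rho^3$) the fixed-point equation gives $x=y$ and $x=-y$, so again $2x=0$ and $x=0$, a contradiction.

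The elements involving $\tau$ require the curve equation, and I would split them into two sub-families. For $g=\tau$ and $g=\rho^2\tau$ the fixed-point condition reads $tx^2=\pm 1$ and $ty^2=\pm 1$ with matching sign, so $x^2=y^2=\pm 1/t$; substituting into $e(x,y)=0$ and using $t\neq 0$ collapses the curve equation to $\pm 2/t - 2 = 0$, i.e. $t=\pm 1$, contradicting $t^2\neq 1$. For $g=\rho\tau$ and $g=\rho^3\tau$ the fixed-point condition instead yields $txy=1$ and $txy=-1$ simultaneously, forcing $2=0$ and contradicting $\mathrm{char}\,k\neq 2$.

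The main obstacle, such as it is, is the $\tau$ and $\rho^2\tau$ cases, which are the only ones where the hypothesis $e(x,y)=0$ is genuinely needed: the substitution of $x^2=y^2=\pm 1/t$ into the curve polynomial must be carried out, and it is precisely the exclusion $t^2\neq 1$ that prevents a fixed point. All other cases are purely formal consequences of $\mathrm{char}\,k\neq 2$ together with the requirement that both coordinates be nonzero. Since there are only seven elements, the argument is a short finite check once the group structure of $G$ is in hand.
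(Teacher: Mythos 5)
Your proof is correct and follows essentially the same route as the paper: the pure rotations $\rho^k$ are eliminated by $\mathrm{char}\,k\ne 2$ together with $x,y\ne 0$, and the elements $\tau\rho^k$ split into the two subcases $txy=\pm1$ simultaneously (giving $2=0$) and $tx^2=ty^2=\pm1$ (which substituted into $e(x,y)=0$ gives $2(\pm1-t)/t=0$, contradicting $t^2\ne1$). Your write-up is just a more explicit enumeration of the same case analysis the paper compresses into two sentences.
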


\begin{proof} Write $P=(x,y)$.  If $g = \rho^k\ne 1_G$, then $g P = P$
  implies that $2x=2y=0$ and and $x=y=0$ (if the characteristic is not
  two), which is not a point on the curve.  If $g = \tau \rho^k$, then
  the fixed-point condition $g P = P$ leads to $2t x y=0$ or $t x^2 =
  t y^2 =\pm 1$.  Then $e(x,y) = 2 (\pm1-t)/t\ne0$, and again $P$ is
  not a point on the curve.
\end{proof}

The domain of $\oplus_i$ is
\[
\Eaf{i} := \{(P,Q)\in \Eaff^2\mid \delta_i(P,Q)\ne0\}.
\]
Whenever we write $P\oplus_i Q$, it is always accompanied by the
implicit assertion that $(P,Q)\in \Eaf{i}$.

There is a group isomorphism $\ang{\rho}\to \Eaff\setminus\Eoo$ given by
\[
g\mapsto g(1,0)\in\{\pm (1,0),\pm (0,1)\} = \Eaff\setminus \Eoo.
\]

\begin{lemma}[dichotomy]\label{lemma:noco} 
\par
\noindent  
Let $P,Q\in \Eaff$.  Then either $P\in \Eoo$ and $Q=g \iota\, P$ for
some $g\in \tau\ang{\rho}$, or $(P,Q)\in \Eaf{i}$ for some $i$.
Moreover, assume that $P\oplus_i Q = (1,0)$ for some $i$, then $Q =
\iota\,P$.
\end{lemma}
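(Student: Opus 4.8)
The plan is to establish the dichotomy in contrapositive form: assuming $(P,Q)\notin\Eaf{0}\cup\Eaf{1}$, that is $\delta_0(P,Q)=\delta_1(P,Q)=0$, I would show that $P\in\Eoo$ and $Q=g\,\iota P$ for some $g\in\tau\ang{\rho}$. The first move is to clear away zero coordinates. Since $\delta_{0x}=1-t^2x_1x_2y_1y_2$ and $\delta_{0y}=1+t^2x_1x_2y_1y_2$ both collapse to $1$ as soon as any one of $x_1,y_1,x_2,y_2$ vanishes, a point with a zero coordinate has $\delta_0=1$ and hence lies in $\Eaf{0}$. So in the contrapositive all four coordinates are units and $P,Q\in\Eoo$.

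The core is a short symmetric-function calculation, which I would run with $n_i=x_iy_i$, $m_i=x_i^2-y_i^2$, $s_i=x_i^2+y_i^2$. Expanding the denominators gives $\delta_0=1-t^4n_1^2n_2^2$ and $\delta_1=n_1m_2-n_2m_1$, while $e_i=0$ reads $s_i=1+t^2n_i^2$. From $\delta_0=0$ and $\mathrm{char}\,k\neq2$ I obtain $t^2n_1n_2=\epsilon$ with $\epsilon\in\{+1,-1\}$, so $n_2=\epsilon/(t^2n_1)$; substituting into $\delta_1=0$ gives $m_2=\epsilon m_1/(t^2n_1^2)$. Now $x_2^2=(s_2+m_2)/2$ and $y_2^2=(s_2-m_2)/2$, and after replacing $t^2n_1^2+1$ by $s_1=x_1^2+y_1^2$ these simplify to $\{x_2^2,y_2^2\}=\{1/(t^2x_1^2),1/(t^2y_1^2)\}$, with the assignment dictated by $\epsilon$. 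The product constraint $x_2y_2=n_2$ then pins the relative sign, so that $Q$ is precisely one of the four points $\tau\rho^k\iota P$. A direct check that each such target satisfies $\delta_0=\delta_1=0$ and lies on the curve shows the two alternatives are mutually exclusive, so this is a genuine dichotomy.

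For the \emph{moreover} clause I would analyze $P\oplus_iQ=(1,0)$ for $i=0$ and $i=1$ separately, each reducing to a finite system. For $i=1$ the vanishing of the second coordinate gives $x_2y_2=-x_1y_1$, hence $n_2=-n_1$ and $s_2=s_1$, so $(x_2+y_2)^2=(x_1-y_1)^2$ and $(x_2-y_2)^2=(x_1+y_1)^2$; the four sign choices yield the candidates $\iota P,\rho P,\rho^{-1}P,\rho^2\iota P$. The two rotations have $\delta_{1y}=x_1x_2+y_1y_2=0$ and are excluded by $\delta_1\neq0$, while $\rho^2\iota P$ makes the first coordinate equal $-1$ rather than $+1$, leaving only $Q=\iota P$. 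The case $i=0$ is the analogous check: eliminating $y_2$ from the two coordinate conditions and reducing modulo $e_2$ factors as $(x_2-x_1)$ times a second polynomial whose vanishing, after using $e_1$, forces $t^2x_1^2y_1^2=\pm1$; the value $+1$ merely returns $x_2=x_1$, and $-1$ makes $\delta_{0x}=0$, which $\delta_0\neq0$ forbids. Again $Q=\iota P$.

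The step I expect to be the main obstacle is preventing the exceptional locus $x_1^2+y_1^2=0$ from corrupting the argument. A naive split on which factor of each $\delta_i$ vanishes invites division by $x_1^2+y_1^2$ (for instance $\delta_{1x}=\delta_{1y}=0$ already forces $x_1^2+y_1^2=0$), manufacturing spurious special points. The symmetric-function route in the second paragraph is engineered to dodge this: it never divides by $x_1^2+y_1^2$ and handles both $\delta_0$-signs and both $\delta_1$-factors at once, which is exactly what lets us bypass the Gr\"obner-basis computation mentioned earlier. I would take care to flag each appeal to $\mathrm{char}\,k\neq2$—separating $\epsilon=+1$ from $\epsilon=-1$, halving $s_2\pm m_2$ to recover $x_2^2,y_2^2$, and distinguishing first coordinate $+1$ from $-1$ in the moreover clause.
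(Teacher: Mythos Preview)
Your argument is correct and takes a genuinely different route from the paper. The paper first passes through $\tau$ (writing $Q_0=\tau Q$) to clear denominators, uses the rotation $\rho$ to assume without loss of generality that $\delta_{0x}=0$, then splits on which factor of $\delta_1$ vanishes and invokes a Gr\"obner basis in each branch to extract $(a_0,b_0)=\pm(b_1,a_1)$; the \emph{moreover} clause is handled by another Gr\"obner reduction. You instead observe the clean identities $\delta_0=1-t^4n_1^2n_2^2$ and $\delta_1=n_1m_2-n_2m_1$ in the symmetric functions $n_i=x_iy_i$, $m_i=x_i^2-y_i^2$, $s_i=x_i^2+y_i^2$, which lets you solve for $n_2,m_2,s_2$ directly and recover $(x_2^2,y_2^2,x_2y_2)$ without splitting on individual factors and without any computer algebra. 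This buys you a self-contained pencil-and-paper proof and, as you note, sidesteps the degenerate locus $x_1^2+y_1^2=0$ that a naive factor-by-factor split would stumble over. The paper's approach, by contrast, has the advantage of being machine-checkable by a single uniform recipe (polynomial division against a Gr\"obner basis), which fits its stated goal of formal verification.

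One small imprecision: your sketch of the $i=0$ branch of the \emph{moreover} clause says the residual factor forces $t^2x_1^2y_1^2=\pm1$. What the elimination actually gives (after substituting $y_2=-y_1x_2/x_1$ and using $e_1$) is $(x_2-x_1)(1-t^2x_1x_2y_1^2)=0$ from the first-coordinate condition and $(x_2^2-x_1^2)(1-t^2x_2^2y_1^2)=0$ from $e_2$; combining these, the only surviving alternative to $x_2=x_1$ is $x_2=-x_1$ with $t^2x_1^2y_1^2=-1$, and then $\delta_{0x}=1+t^2x_1^2y_1^2=0$ as you intended. The conclusion and the mechanism are right; only the intermediate bookkeeping needs adjusting. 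You should also make explicit the degenerate case $x_1=0$ (so $P=(0,\pm1)$), which your elimination divides by; it is a two-line direct check.
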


\begin{proof}  We start with the first claim.
We analyze the denominators in the formulas for $\oplus_i$.  
We have $(P,Q)\in\Eaf{0}$ for all $P$ or $Q\in \Eaff\setminus\Eoo$.
That case completed,  we may assume that $P,Q\in \Eoo$.
  Assuming
  \[
  \delta_0(P,Q) = \delta_{0x}(P,Q)\delta_{0y}(P,Q)=0,\quad\text{and}\quad
  \delta_1(P,Q) = \delta_{1x}(P,Q)\delta_{1y}(P,Q)=0,
  \]
  we show that $Q = g \iota P$ for some $g\in \tau\ang{\rho}$.
  Replacing $Q$ by $\rho Q$ if needed, which exchanges
  $\delta_{0x}\leftrightarrow \delta_{0y}$, we may assume that
  $\delta_{0x}(P,Q)=0$.  Set $\tau Q = Q_0 = (a_0,b_0)$ and
  $P=(a_1,b_1)$.  

We claim that
\begin{equation}\label{eqn:gp-}
(a_0,b_0) \in \{\pm (b_1,a_1)\} \subset \Go\iota\,P.
\end{equation}
Write $\delta',\delta_{+},\delta_{-}$ for $x_0 y_0\delta_{0x}$, $t x_0
y_0\delta_{1x}$, and $t x_0 y_0 \delta_{1y}$ respectively, each
evaluated at $(P,\tau(Q_0))=(x_1,y_1,1/(t x_0),1/(t y_0))$.  (The
nonzero factors $x_0y_0$ and $t x_0 y_0$ have been included to clear
denominators, leaving us with polynomials.)

We have two cases $\pm$, according to $\delta_{\pm}=0$.  In each case,
let
\[
S_\pm = \text{Gr\"obner basis of } \{e_1,e_2, 
\delta',\delta_{\pm},q x_0 x_1 y_0 y_1 - 1\}.
\]
The polynomial $q x_0 x_1 y_0 y_1-1$ is included to encode the
condition $a_0,b_0,a_1,b_1\ne 0$, which holds on $\Eoo$.  Polynomial
division gives
\begin{equation}\label{eqn:dichot}
(x_0^2-x_1^2,y_0^2-x_1^2,x_0 y_0 - x_1 y_1) \equiv (0,0,0) \mod S_\pm.
\end{equation}
These equations immediately yield $(a_0,b_0) = \pm (b_1,a_1)$ and
(\ref{eqn:gp-}).  This gives the claim.  In summary, we have $\tau Q =
Q_0 = (a_0,b_0) = g \iota\,P$, for some $g\in \Go$.  Then $Q = \tau g
\iota\,P$.

The second statement of the lemma has a similar proof.  Polynomial
division gives for $i\in \ring{F}_2$:
\[
z_1 \equiv \iota (z_2) \mod \text{Gr\"obner} 
\{ e_1,e_2,q x_1 y_1 x_2 y_2 -1,\nu_{i y},\nu_{i x}-\delta_{i x} \}.
\]
Note that $\nu_{i y}=\nu_{i x}-\delta_{i x}=0$ is the condition for
the sum to be the identity element:
$(1,0)=(\nu_{ix}/\delta_{ix},\nu_{iy}/\delta_{iy})$.
\end{proof}

\begin{lemma}[covering] The rule (\ref{eqn:add-proj}) defining
  $\oplus$ assigns at least one value for every pair of points in $E$.
\end{lemma}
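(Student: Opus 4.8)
The plan is to reduce the statement to the dichotomy lemma (Lemma~\ref{lemma:noco}) and the freeness of the $G$-action (Lemma~\ref{lemma:no-fix}), the idea being that if a first choice of affine representatives is not addable, then a single regluing repairs it. Given two points of $E$, I would first pick affine representatives $[P,i]$ and $[Q,j]$ with $P,Q\in\Eaff$ and apply the dichotomy lemma to the pair $(P,Q)$. Either $(P,Q)\in\Eaf{\ell}$ for some $\ell\in\ring{F}_2$, in which case the rule (\ref{eqn:add-proj}) already assigns the value $[P\oplus_\ell Q,\,i+j]$ and we are done; or we fall into the exceptional case $P\in\Eoo$ and $Q=g\,\iota P$ for some $g\in\tau\ang{\rho}$. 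Note that the exceptional case forces both $P,Q\in\Eoo$, which is what makes the next step legitimate.

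In the exceptional case, since $P\in\Eoo$ I would invoke the gluing relation (\ref{eqn:glue}) to rewrite the first point also as $[\tau P,\,i+1]$, and then test the new pair $(\tau P,Q)$ against the dichotomy lemma. If $(\tau P,Q)\in\Eaf{\ell}$ for some $\ell$, the rule assigns $[\tau P\oplus_\ell Q,\,i+1+j]$, and again we are done. Thus the whole argument reduces to ruling out the possibility that both $(P,Q)$ and $(\tau P,Q)$ are exceptional at once.

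This exclusion is the heart of the argument. Suppose both pairs are exceptional, so $Q=g\,\iota P$ and $Q=g'\,\iota(\tau P)$ with $g,g'\in\tau\ang{\rho}$. Using the inverse rule $\iota\tau=\tau^{-1}\iota$ from (\ref{eqn:r-iota}) together with the fact that $\tau$ is an involution (immediate from its definition), I would rewrite $\iota(\tau P)=\tau\,\iota P$, so that the second relation becomes $Q=g'\tau\,\iota P$. Since $G$ is abelian and $\tau^2=1_G$, the element $g'\tau$ lies in $\ang{\rho}$. Comparing the two expressions for $Q$ gives $g\,(\iota P)=(g'\tau)(\iota P)$ with $\iota P\in\Eoo$, so freeness of the $G$-action on $\Eoo$ (Lemma~\ref{lemma:no-fix}) forces $g=g'\tau\in\ang{\rho}$. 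But $g\in\tau\ang{\rho}$, and the cosets $\ang{\rho}$ and $\tau\ang{\rho}$ are disjoint (as $\ang{\rho}$ has index two in $G$), a contradiction. Hence at least one of the two representative choices is addable, and the rule (\ref{eqn:add-proj}) assigns a value to every pair of points of $E$.

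I expect the main obstacle to be the bookkeeping in $G$: correctly propagating $\tau$ and $\rho$ through the inverse rules and the gluing map, and in particular confirming that the coset membership $g'\in\tau\ang{\rho}$ is genuinely incompatible with its consequence $g'\tau\in\ang{\rho}$. A secondary point requiring care is the (easy) observation that the exceptional branch of the dichotomy places both representatives in $\Eoo$, so that regluing the first representative is permitted and freeness may legitimately be invoked on $\iota P$.
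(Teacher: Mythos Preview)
Your proof is correct and follows essentially the same approach as the paper: apply dichotomy, and in the exceptional branch reglue one of the two points via $\tau$, then use the fixed-point freeness of the $G$-action to exclude the doubly-exceptional case by a coset argument in $G=\ang{\rho}\sqcup\tau\ang{\rho}$. The only cosmetic difference is that the paper reglues $Q$ (writing $[Q,j]=[\tau Q,j+1]$ and checking that $\tau Q$ cannot lie in $\tau\ang{\rho}\iota P$), whereas you reglue $P$; the underlying contradiction is the same.
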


\begin{proof} If $Q=\tau \rho^k \iota\,P$, then $\tau Q$ does not have
  the form $\tau\rho^k\iota P$ because the action of $G$ is
  fixed-point free.  By dichotomy,
\begin{equation}\label{eqn:tt}
[P,i]\oplus [Q,j] = [P\oplus_\ell \tau Q,i+j+1]
\end{equation}
works for some $\ell$.  Otherwise, by dichotomy $P\oplus_\ell Q$ is
defined for some $\ell$.
\end{proof}

\begin{lemma}[well-defined] Addition $\oplus$ given by
  (\ref{eqn:add-proj}) on $E$ is well-defined.
\end{lemma}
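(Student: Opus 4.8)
The plan is to show that the element $[P\oplus_\ell Q, i+j]$ of $E$ does not depend on any of the choices implicit in (\ref{eqn:add-proj}): the representatives of the two classes and the index $\ell$. Since the gluing relation (\ref{eqn:glue}) gives each class $[P,i]$ at most two representatives, the second being $(\tau P, i+1)$ and available exactly when $P\in\Eoo$, it suffices to prove invariance under three elementary moves: (a) changing $\ell$ for a fixed pair of representatives; (b) replacing $(P,i)$ by $(\tau P, i+1)$; and (c) the symmetric move on the second argument. Any two admissible computations are linked by such moves, so (modulo the admissibility caveat discussed below) invariance under each of them yields the lemma.

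Move (a) is immediate from the coherence rule (\ref{eqn:r-coh}): if $\delta_0(P,Q)\ne0$, $\delta_1(P,Q)\ne0$, and $P,Q\in\Eaff$, then $P\oplus_0 Q = P\oplus_1 Q$ as points of $\Eaff$, so the two values agree. For move (b) I would first record the rational-function identity $\tau P\oplus_\ell Q = \tau(P\oplus_{\ell+1}Q)$, which follows at once from the definition $z_1\oplus_1 z_2 = \tau((\tau z_1)\oplus_0 z_2)$ in (\ref{eqn:tauplus}) together with $\tau^2=\mathrm{id}$, and is consistent with inversion invariance (\ref{eqn:r-tau}). Replacing $(P,i)$ by $(\tau P,i+1)$ changes the value to $[\tau P\oplus_\ell Q, i+1+j]=[\tau(P\oplus_{\ell+1}Q), i+1+j]$ for an admissible $\ell$. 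Writing $R:=P\oplus_{\ell+1}Q$ and assuming $R\in\Eoo$, the gluing relation (\ref{eqn:glue}) gives $[\tau R, i+1+j]=[R,i+j]$, and move (a) then identifies $[R,i+j]$ with the original value. Move (c) is symmetric, transporting $\tau$ between the arguments via inversion invariance.

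The main obstacle is the degenerate instance of move (b) in which the intermediate sum is \emph{not} in $\Eoo$: one of its denominators vanishes, so $R$ is either undefined or a boundary point of $\Eaff\setminus\Eoo$, where gluing does not apply and the clean argument breaks down. The key is to show that in precisely these cases the competing $(P,i)$-computation yields no value, so no conflict can arise. Writing $\nu_{1x},\nu_{1y}$ and $\delta_{1x},\delta_{1y}$ for the numerators and denominators of the two coordinates of $\oplus_1$, the output $\tau P\oplus_0 Q$ simplifies to $\bigl(\delta_{1x}/(t\,\nu_{1x}),\,\delta_{1y}/(t\,\nu_{1y})\bigr)$, which is defined exactly when $\nu_{1x},\nu_{1y}\ne0$ and lands on the boundary exactly when $\delta_{1x}$ or $\delta_{1y}$ vanishes. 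But the polynomial identity underlying coherence, $(x_1x_2-y_1y_2)\,\delta_{1x}\equiv \nu_{1x}\,\delta_{0x}\pmod{e_1,e_2}$ together with its $y$-analogue, forces $\delta_{0x}(P,Q)=0$ whenever $\delta_{1x}(P,Q)=0$ and $\nu_{1x}\ne0$ on the curve; hence $\delta_0(P,Q)=\delta_1(P,Q)=0$ and the $(P,i)$-representative admits no admissible $\ell$ at all. Thus only the $(\tau P,i+1)$-representative produces a value and there is nothing to reconcile. I expect this boundary bookkeeping — checking that the vanishing of the relevant denominators is forced by the coherence identities, with the case enumeration organized by the dichotomy lemma (Lemma~\ref{lemma:noco}) — to be the only nontrivial part; everything else reduces to the rational-function identities already established.
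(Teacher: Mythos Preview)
Your argument is correct and rests on the same three ingredients as the paper: coherence (\ref{eqn:r-coh}) for independence of $\ell$, the identity $\tau z_1\oplus_\ell z_2=\tau(z_1\oplus_{\ell+1}z_2)$ coming from (\ref{eqn:tauplus}) and (\ref{eqn:r-tau}) together with gluing (\ref{eqn:glue}) for independence of the representative, and the dichotomy lemma to control the degenerate case. The difference is purely organizational. The paper leads with dichotomy: in the special case $Q=\tau\rho^k\iota P$ one checks directly that $\delta_0(P,Q)=\delta_1(P,Q)=0$, so only the rule (\ref{eqn:tt}) through the \emph{other} representative ever fires and uniqueness is automatic; in the generic case the single chain
\[
[P\oplus_k\tau Q,\,i{+}j{+}1]=[\tau(P\oplus_k\tau Q),\,i{+}j]=[P\oplus_{k+1}Q,\,i{+}j]=[P\oplus_\ell Q,\,i{+}j]
\]
goes through without obstruction. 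You instead decompose into elementary moves and handle the boundary case by reading off $\delta_0=\delta_1=0$ from the cross-identities $\nu_{0x}\delta_{1x}\equiv\nu_{1x}\delta_{0x}$ etc.\ that underlie coherence---which is exactly the content of dichotomy, recovered by hand. The paper's route is shorter because dichotomy absorbs all of your boundary bookkeeping in one stroke; your route makes more visible \emph{why} the competing representative is forced to be unavailable.
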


\begin{proof}
  The right-hand side of (\ref{eqn:add-proj}) is well-defined by
  coherence (\ref{eqn:r-coh}), provided we show well-definedness
  across gluings (\ref{eqn:glue}).  We use dichotomy.  If $Q=\tau
  \rho^k \iota\,P$, then by an easy simplification of polynomials,
\[
\delta_0(z,\tau\rho^k\iota z)=\delta_1(z,\tau\rho^k\iota z)=0.
\]
so that only one rule (\ref{eqn:tt}) for $\oplus$ applies (up to
coherence (\ref{eqn:r-coh}) and inversion (\ref{eqn:r-tau})), making
it necessarily well-defined.  Otherwise, coherence (\ref{eqn:r-coh}),
inversion (\ref{eqn:r-tau}), and (\ref{eqn:tauplus})) give when
$[Q,j]=[\tau Q,j+1]$:
 \[ 
[P\oplus_k \tau Q,i+j+1]=[\tau(P\oplus_k \tau Q),i+j] =
 [P\oplus_{k+1} Q,i+j] = [P\oplus_\ell Q,i+j].
\]
\end{proof}

\subsection{group}

\begin{theorem}  $E$ is an abelian group.
\end{theorem}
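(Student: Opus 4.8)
The plan is to verify the four abelian-group axioms for $E$ directly, leaning on the machinery already assembled: the covering and well-defined lemmas guarantee that $\oplus$ is a genuine total binary operation on $E$, so what remains is to check that it is commutative, has identity $[(1,0),0]$, admits inverses $[P,i]\mapsto[\iota P,i]$, and is associative. The first three are quick. Commutativity of $\oplus$ on $E$ descends from the subscript symmetry $z_1\oplus_\ell z_2 = z_2\oplus_\ell z_1$ of each $\oplus_\ell$ (which holds as an identity of rational functions) together with the symmetry $i+j=j+i$ in $\ring{F}_2$. For the identity axiom, I would observe that $[P,i]\oplus[(1,0),0]$ falls into the case $P\oplus_0(1,0)$, since $(1,0)\in\Eaff\setminus\Eoo$ forces $\delta_0(P,(1,0))\ne0$ by the dichotomy lemma, and $\oplus_0$ has $(1,0)$ as a unit. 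For inverses, the second statement of the dichotomy lemma already shows $Q=\iota P$ is the unique solution of $P\oplus_i Q=(1,0)$, and combined with $i+i=0$ in $\ring{F}_2$ this yields $[P,i]\oplus[\iota P,i]=[(1,0),0]$; well-definedness of the inverse map across gluings is guaranteed by the inverse rules~(\ref{eqn:r-iota}).

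The substance of the theorem is associativity, and here the plan is to reduce the projective identity to the generic affine identity of Lemma~\ref{lemma:assoc}. Given $[P,i]$, $[Q,j]$, $[R,k]$ in $E$, both $([P,i]\oplus[Q,j])\oplus[R,k]$ and $[P,i]\oplus([Q,j]\oplus[R,k])$ evaluate, after using the covering lemma to select admissible indices and discarding the bookkeeping in the $\ring{F}_2$-component (which is manifestly associative), to expressions of the shape $(P\oplus_a Q)\oplus_b R$ and $P\oplus_{a'}(Q\oplus_{b'}R)$ for various index choices. The coherence relation~(\ref{eqn:r-coh}), namely $z_1\oplus_0 z_2\equiv z_1\oplus_1 z_2\bmod\{e_1,e_2\}$, lets me normalize all the subscripts to $0$ whenever the relevant $\delta$'s are invertible, so that the comparison collapses to the already-proved affine associativity $(P_1\oplus P_2)\oplus P_3 = P_1\oplus(P_2\oplus P_3)$ of Lemma~\ref{lemma:assoc}. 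The gluing~(\ref{eqn:glue}) and the $G$-action supply the moves needed to realign the two bracketings when the naive index choices differ.

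The main obstacle I expect is precisely this index-matching, i.e.\ showing the two sides land in the same glued class even though the covering lemma may assign them different subscripts $\ell$ and may route one side through~(\ref{eqn:tt}) and the other directly. The delicate situation is when one of the intermediate sums, say $P\oplus_a Q$, happens to lie in $\Eoo$, so that the outer addition with $R$ could be computed either directly or after applying $\tau$; here I must invoke well-definedness to see that the choice does not matter, and must ensure that the $\delta$-nonvanishing hypotheses of Lemma~\ref{lemma:assoc} can be arranged, falling back on the dichotomy lemma to handle the cases where a denominator vanishes. Concretely, I would fix arbitrary admissible representatives, use inversion invariance~(\ref{eqn:r-tau}) and rotation invariance~(\ref{eqn:r-rho}) to move all $\tau$'s and $\rho$'s to the outside, reduce to a single affine associativity statement with all indices equal to $0$, and then appeal to Lemma~\ref{lemma:assoc}. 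Since some denominators genuinely vanish for special configurations, a short case analysis over the finitely many classes identified by dichotomy (the exceptional locus where $Q=g\iota P$) is unavoidable, but each such case is handled by the explicit normal forms already established, so no hard case is hidden.
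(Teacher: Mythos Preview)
Your outline for commutativity, identity, and inverse is fine and matches the paper. The gap is in your associativity argument, and it is a real one.

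You propose to normalize all subscripts to $0$ via coherence~(\ref{eqn:r-coh}) and then invoke Lemma~\ref{lemma:assoc}. But coherence requires \emph{both} $\delta_0$ and $\delta_1$ to be invertible, and Lemma~\ref{lemma:assoc} requires the full product $\Delta_x\Delta_y$ (built from $\delta_0$'s only) to be invertible. Neither condition can be forced in the projective setting: the very reason the covering lemma sometimes selects $\ell=1$ is that $\delta_0$ vanishes, and then you cannot swap back to $\oplus_0$. Your plan to ``move all $\tau$'s and $\rho$'s to the outside'' via (\ref{eqn:r-tau}) and (\ref{eqn:r-rho}) does not fix this: inversion invariance moves $\tau$ between the two arguments, not outside the sum, and after unwinding $\oplus_1=\tau(\tau(\cdot)\oplus_0\cdot)$ you still have to check that the transformed triple has all four $\delta_0$'s nonzero, which is exactly the obstruction you started with. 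The paper sidesteps this by proving, via polynomial division, the full family of sixteen affine identities
\[
(z_1\oplus_k z_2)\oplus_\ell z_3 \equiv z_1\oplus_i(z_2\oplus_j z_3)\mod\{e_1,e_2,e_3\},\qquad i,j,k,\ell\in\ring{F}_2,
\]
each in its own localization, so that whatever indices the covering lemma hands you are already covered.

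The second missing ingredient is the cancellation identity
\[
([P,0]\oplus[Q,0])\oplus[\iota Q,0]=[P,0],
\]
which the paper proves separately and uses to dispose of the cases where dichotomy returns $Q=g\iota P$ (and hence even the mixed affine identities above do not apply because no $\delta_\ell$ survives). Your sentence ``each such case is handled by the explicit normal forms already established'' is where the proof actually hides work: no previously established lemma gives this cancellation, and it is the genuine content of the exceptional case. Without it, associativity is not complete.
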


\begin{proof} We have already shown the existence of an identity and
  inverse.

  We prove associativity.  Both sides of the associativity identity
  are clearly invariant under shifts $[P,i]\mapsto [P,i+j]$ of the
  indices.  Thus, it is enough to show
\[
[P,0] \oplus ([Q,0]\oplus [R,0]) = ([P,0]\oplus [Q,0])\oplus [R,0].
\]
By polynomial division, we have the following associativity identities
\begin{equation}\label{eqn:assoc-affine}
 (z_1\oplus_k z_2)\oplus_\ell z_3 \equiv z_1 
\oplus_i (z_2\oplus_j z_3) \mod \{e_1,e_2,e_3\}
\end{equation}
in the appropriate localizations, for $i,j,k,\ell\in \ring{F}_2$.

Note that $(g [P_1,i])\oplus [P_2,j] = g([P_1,i]\oplus [P_2,j])$ for
$g\in G$, as can easily be checked on generators $g=\tau,\rho$ of $G$,
using dichotomy, (\ref{eqn:add-proj}), and (\ref{eqn:r-rho}).  We use
this to cancel group elements $g$ from both sides of equations without
further comment.

We claim that
\begin{equation}\label{eqn:semi}
([P,0]\oplus [Q,0])\oplus [\iota\,Q,0] = [P,0].
\end{equation}
The special case $Q= \tau\rho^k \iota(P)$ is easy. We reduce the claim
to the case where $P\oplus_\ell Q\ne \tau\rho^k Q$, by applying $\tau$
to both sides of (\ref{eqn:semi}) and replacing $P$ with $\tau P$ if
necessary.  Then by dichotomy, the left-hand side simplifies by affine
associativity \ref{eqn:assoc-affine} to give the claim.

Finally, we have general associativity by repeated use of dichotomy,
which reduces in each case to (\ref{eqn:assoc-affine}) or
(\ref{eqn:semi}).
\end{proof}

When the characteristic of $k$ is two, we have
\[
e(x,y)= x^2 + y^2 - 1 - t^2 x^2 y^2 
= ( x y + p(x + 1) + q y)^2 t^2,\quad p=q=t^{-1},
\]
so that the Edwards curve is itself a hyperbola in our family and the
group law is invalid.  The sum $(x,x)\oplus_i (x,x)$ is not defined
when $x^2 t=1$.

\section{Hyperbola revisited}

Our proof of the group axioms in the previous sections does not
logically depend on the geometric interpretation of addition as
intersection points with a hyperbola.  Here we show that the addition
formula (\ref{eqn:sum}) is indeed given by hyperbolic addition (when a
determinant $D$ is nonzero).  We revert to the meaning of $R_0$ and
$R_n$ from Section \ref{sec:axiom}.

\subsection{addition}

Three points $(x_0,y_0)$, $(x_1,y_1)$, and $(x_2,y_2)$ in the plane
are collinear if and only if the following determinant is zero:
\[
\begin{vmatrix}
x_0 & y_0 & 1\\
x_1 & y_1 & 1\\
x_2 & y_2 & 1
\end{vmatrix}.
\]
When $(x_0,y_0) = (-1,0)$, the determinant
is $D= (x_1+1) y_2 - (x_2+1) y_1\in R_2$. 
We recall the polynomial
\[
h(p,q,x,y) = x y + p (x+1) + q y \in R_0[p,q,x,y]
\]
representing a family of hyperbolas.
We can solve the two linear equations
\begin{equation}\label{eqn:pq}
h(p,q,x_1,y_1)=h(p,q,x_2,y_2)=0
\end{equation}
uniquely for $p=p_0$ and $q=q_0$ in the ring $R_2[\f{D}]$ to obtain
$h(x,y) = h(p_0,q_0,x,y) \in R_2[\f{D}][x,y]$.  It represents the
unique hyperbola in the family passing through points and $(-1,0)$,
$(x_1,y_1)$, and $(x_2,y_2)$.
 
\begin{lemma}[hyperbolic addition]\label{lemma:hyperbola}
  Let $\phi:R_2[\f{D\delta}]\to A$ be a ring homomorphism 
  If $e(P_1) =  e(P_2) = 0$, then
  $\bar h(\iota(P_1\oplus P_2))=0$.
\end{lemma}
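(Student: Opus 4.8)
The plan is to verify directly that the point $z_3 := \iota(z_1\oplus z_2)$ produced by the algebraic formula (\ref{eqn:add}) lies on the hyperbola $h$ that the geometric construction fits through $(-1,0)$, $z_1$, and $z_2$, and to do so by exhibiting the numerator of $\bar h(\iota(z_1\oplus z_2))$ as a member of the ideal $(e_1,e_2)$. Since $h = h(p_0,q_0,\cdot,\cdot)$ is pinned down by the two incidence conditions (\ref{eqn:pq}), the first step is to record $p_0$ and $q_0$ explicitly. Cramer's rule applied to the linear system (\ref{eqn:pq}), whose coefficient determinant is exactly $D$, gives $p_0 = y_1 y_2 (x_2 - x_1)/D$ and $q_0 = (x_1 y_1(x_2+1) - x_2 y_2(x_1+1))/D$ in $R_2[\f{D}]$. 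This is precisely why the lemma localizes at $D$: non-collinearity, encoded as invertibility of $D$, is what makes the fitted hyperbola well-defined.

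Next I would substitute. Writing $z_1\oplus z_2 = (X,Y)$ with $X = (x_1 x_2 - c y_1 y_2)/\delta^-$ and $Y = (x_1 y_2 + y_1 x_2)/\delta^+$, we have $\iota(z_1\oplus z_2) = (X,-Y)$, so
\[
h(\iota(z_1\oplus z_2)) = -XY + p_0(X+1) - q_0 Y .
\]
Every denominator occurring here is a power of $D$, $\delta^-$, or $\delta^+$, each of which maps to a unit under any homomorphism $\phi:R_2[\f{D\delta}]\to A$. Clearing these denominators leaves a polynomial numerator $r\in R_2$, and it suffices to prove $\phi(r)=0$.

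The heart of the argument is then a single polynomial division: I claim
\[
h(\iota(z_1\oplus z_2)) \equiv 0 \mod \{e_1,e_2\}
\]
in $R_2[\f{D\delta}]$; concretely, $r = r_1 e_1 + r_2 e_2$ for some $r_1,r_2\in R_2$ produced by $\op{PolynomialReduce}[r,\{e_1,e_2\},\{x_1,x_2,y_1,y_2\}]$. Granting this certificate, the kernel property, together with $e(P_1)=e(P_2)=0$ and the invertibility of $\phi(D\delta)$, yields $\bar h(\iota(P_1\oplus P_2))=0$, as required.

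The computation is routine once it is set up, so there is no genuine conceptual obstacle; the real content is that the certificate $r = r_1 e_1 + r_2 e_2$ exists at all. This is exactly the consistency check between the two notions of addition: the explicit formula (\ref{eqn:add}) was reverse-engineered to be the fourth intersection of the Edwards curve with the fitted hyperbola, so the reduction to zero modulo $\{e_1,e_2\}$ succeeds precisely because of the special form of $e$, and would fail for $z_1,z_2$ off the curve. The one point demanding care is the bookkeeping of the three distinct denominators $D$, $\delta^-$, and $\delta^+$, and confirming that all of them are units in the stated localization $R_2[\f{D\delta}]$ so that the kernel-property argument applies verbatim.
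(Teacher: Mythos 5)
Your proposal is correct and follows essentially the same route as the paper: express $h(\iota(z_1\oplus z_2))$ as a fraction $r/(D\delta)$ in $R_2[\f{D\delta}]$, certify $r\equiv 0 \bmod \{e_1,e_2\}$ by polynomial division, and conclude via the kernel property. Your explicit Cramer's-rule formulas for $p_0$ and $q_0$ are correct and merely spell out what the paper sets up just before the lemma statement.
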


\begin{proof}
  We work in the ring $R_2[\f{D\delta}]$ and write
\[
h(x_3',-y_3') = \frac{r}{D\delta},\quad 
\text{where } (x_3',y_3') = (x_1,y_1)\oplus (x_2,y_2)
\]
for some polynomial $r \in R_2$.  Polynomial division gives
\begin{equation}\label{eqn:h}
r \equiv 0 \mod \{e_1,e_2\}.
\end{equation}
\end{proof}

\subsection{group law based on divisors}

In this subsection, we sketch a second proof of the group law for
Edwards curves that imitates a standard proof for the chord and
tangent construction for Weierstrass curves.  The proof is not as
elementary as our first proof, but it achieves greater conceptual
simplicity.  Here we work over an algebraically closed field $k$ of
characteristic different from $2$.

\begin{theorem} Let $E$ be an Edwards curve of genus one with
  $k$-points $E(k)\subset \ring{P}_k^1\times \ring{P}_k^1$.  Then
  $E(k)$ is a group under hyperbolic addition.
\end{theorem}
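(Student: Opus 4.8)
The plan is to realize $(E(k),\oplus)$ as the divisor class group $\operatorname{Pic}^0(E)$ via the classical Abel--Jacobi isomorphism, and then to check that hyperbolic addition is transported to the manifestly associative and commutative addition of divisor classes. Throughout I take $E$ to be the smooth projective model of the $(2,2)$-curve (\ref{eqn:ed}) inside $\ring{P}_k^1\times\ring{P}_k^1$, of genus one by hypothesis, and I fix the identity point $O=(1,0)$; I write $z_0=(-1,0)$ and $\iota(x,y)=(x,-y)$ as before, and write $D\sim D'$ for linear equivalence of divisors.

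First I would invoke the two standard inputs from the theory of genus-one curves over an algebraically closed field. By Riemann--Roch, every degree-zero class on $E$ is represented by a unique divisor of the form $P-O$ with $P\in E(k)$, so the Abel--Jacobi map $\Phi\colon E(k)\to\operatorname{Pic}^0(E)$, $P\mapsto[P-O]$, is a bijection. Since $\operatorname{Pic}^0(E)$ is an abelian group, it suffices to prove that $\Phi$ carries $\oplus$ to addition of classes, that is
\[
[(z_1\oplus z_2)-O]=[z_1-O]+[z_2-O],
\qquad\text{equivalently}\qquad
z_1\oplus z_2 + O\sim z_1+z_2 .
\]
Associativity, commutativity, the identity $\Phi(O)=0$, and the inverse $\Phi(\iota P)=-\Phi(P)$ then all follow by transport of structure.

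The heart of the proof is a short divisor computation assembled from three relations. (i) The hyperbolas (\ref{eqn:hyp}) are members of the linear system $|\mathcal{O}(1,1)|$ on $\ring{P}_k^1\times\ring{P}_k^1$; each restricts to an effective divisor of degree $2\cdot1+2\cdot1=4$ on $E$, and any two such restrictions are linearly equivalent to a fixed class $H$. For the hyperbola through $z_0,z_1,z_2$, Lemma~\ref{lemma:hyperbola} identifies the fourth intersection point as $z_3=\iota(z_1\oplus z_2)$, so $z_0+z_1+z_2+z_3\sim H$. (ii) The projection $p_1\colon E\to\ring{P}_k^1$ to the $x$-coordinate is a degree-two map whose deck involution is $\iota$; writing $F$ for its fibre class, one has $z+\iota z\sim F$ for every $z$, and since $O$ is a ramification point ($x=1$ forces $y=0$ doubly) also $2O\sim F$. (iii) The degenerate member $xy=0$ of the family exhibits $H\sim F+G$, where $G$ is the fibre class of the projection $p_2$ to the $y$-coordinate; the line $y=0$ meets $E$ only in $O$ and $z_0$, giving $G\sim O+z_0$ and hence $H\sim F+O+z_0$.

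Combining these, and using $z_3+\iota z_3\sim F$ together with $\iota z_3=z_1\oplus z_2$ (so that $z_3\sim F-(z_1\oplus z_2)$), I would compute
\[
z_1+z_2\sim H-z_0-z_3\sim (F+O+z_0)-z_0-(F-(z_1\oplus z_2))=O+(z_1\oplus z_2),
\]
which is exactly the homomorphism identity, completing the argument. The step I expect to be the main obstacle is the bookkeeping behind relation (i): one must verify that $z_0+z_1+z_2+z_3$ really is the full intersection divisor $E\cdot\{h=0\}$, with the correct multiplicities when the hyperbola is tangent to $E$ or when two of the $z_i$ coincide, and that the construction and all four relations persist at the points at infinity of $\ring{P}_k^1\times\ring{P}_k^1$. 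This is where the passage to the smooth genus-one model (as opposed to the possibly singular affine $(2,2)$-curve) does the real work, and it is the reason this proof, unlike the one in Section~\ref{sec:axiom}, needs both the genus-one hypothesis and algebraic closure.
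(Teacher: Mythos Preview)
Your argument is correct and is the standard Abel--Jacobi route: exhibit the linear equivalence $z_1+z_2\sim (z_1\oplus z_2)+O$ and conclude that $\Phi\colon P\mapsto[P-O]$ is a bijective homomorphism onto $\operatorname{Pic}^0(E)$, so all group axioms come for free. Your divisor bookkeeping (the $(1,1)$-class $H$, the fibre classes $F,G$, and the identities $H\sim F+O+z_0$, $z+\iota z\sim F$, $2O\sim F$) is right, and you are honest that the delicate point is verifying that $z_0+z_1+z_2+z_3$ really is the full intersection cycle with multiplicities, including at infinity and in tangential or coincident configurations.

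The paper takes a different, though closely related, divisor-theoretic path. Rather than invoking Riemann--Roch and the Abel--Jacobi bijection, it mimics the classical Cayley--Bacharach ``nine associated points'' argument: it writes down the explicit rational function $(xy+p(x+1)+qy)/(xy)$ attached to each hyperbola, records its divisor as $[P]+[Q]+[R]-[(1,0)]-[(0,1)]-[(0,-1)]$, builds six such functions $f_1,f_2,f_3,f_1',f_2',f_3'$ from a $3\times 3$ grid of points, and observes that the quotient $f_1f_2f_3/(f_1'f_2'f_3')$ has divisor $[P\oplus(Q\oplus R)]-[(P\oplus Q)\oplus R]$. Since a curve of positive genus carries no function with a single simple pole, the two points coincide. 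What your approach buys is economy (one hyperbola instead of six) and a direct identification of $(E,\oplus)$ with $\operatorname{Pic}^0(E)$; what the paper's buys is that it targets associativity alone and needs only the ``no function of degree one'' fact rather than the full Riemann--Roch statement that every degree-zero class is $[P-O]$.
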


\begin{proof}[Proof sketch] 
  Each hyperbola in the family (\ref{eqn:hyp}) determines a rational
  function in the function field of $E$:
\[
\frac{x y + p (x+1) + q y}{x y}.
\]
Its divisor has the form
\begin{equation}\label{eqn:h}
[P] + [Q] + [R] - [(1,0)] - [(0,1)] - [(0,-1)],
\end{equation}
for some points $P,Q,R\in E(k)$.  (In particular, no hidden zeros or
poles lurk at infinity.)  By the definition of hyperbolic addition,
$\iota(R) = P\oplus Q$.  Conversely, three points on the Edwards curve
that sum to $0$ determine a rational function in the family (by
solving Equation (\ref{eqn:pq}) for $p$ and $q$ using any two of the
three points). We consider six rational function
$f_1,f_2,f_3,f_1',f_2',f_3'$ constructed in this manner, where each
rational function is specified by three points of $E(k)$ as indicated
in Figure {\ref{fig:9pt}}.  Each line segment in the figure represents
a hyperbola in our family.

We compute the divisor using (\ref{eqn:h})
\[
\op{div} (f) =\op{div}\left(\frac{f_1 f_2 f_3}{f_1' f_2' f_3'}\right) = 
[P\oplus (Q\oplus R)] - [(P\oplus Q) \oplus R].
\]
Other poles and zeros occur twice with opposite sign.
If the two points on the right are distinct, the function $f$ has exactly one
simple zero and one solitary simple pole.  Such a function does not
exist on a curve of positive genus, so $f$ is constant, $\op{div}(f)=0$, and
\[
P\oplus (Q\oplus R) = (P\oplus Q) \oplus R.
\]
\end{proof}

\tikzfig{9pt}{We define six hyperbolas in our family (each represented
  here as a line segment), specifying each by their three non-fixed
  points on $E$.  The three given points on each hyperbola sum to
  zero.}{
{
\begin{scope}[xshift=0,scale=1.3]
\def\t{0.4}
\draw (0,0-\t) -- (0,2) -- (0,4+\t);
\draw (2,0-\t) -- (2,2) -- (2,4+\t);
\draw (4,0-\t) -- (4,2) -- (4,4-\t);
\draw (0-\t,0) -- (2,0) -- (4+\t,0);
\draw (0-\t,2) -- (2,2) -- (4+\t,2);
\draw (0-\t,4) -- (2,4) -- (4-\t,4);
\draw (0,0) node[anchor=south west] {$(1,0)$};
\draw (2,0) node[anchor=south west] {$R$};
\draw (4,0) node[anchor=south west] {$\iota\, R$};
\draw (0,2) node[anchor=south west] {$P$};
\draw (2,2) node[anchor=south west] {$Q$};
\draw (4,2) node[anchor=south west] {$\iota(P\oplus Q)$};
\draw (0,4+\t) node[anchor=south west] {$\iota\,P$};
\draw (2,4+\t) node[anchor=south] {$\iota(Q\oplus R)$};
\draw (4,4+\t) node[anchor=south] {$P\oplus (Q\oplus R)$};
\draw (4+0.1,4-\t) node[anchor=west] {$(P \oplus Q)\oplus R$};
\draw (-\t,0) node[anchor=east] {$f_1$};
\draw (-\t,2) node[anchor=east] {$f_2$};
\draw (-\t,4) node[anchor=east] {$f_3$};
\draw (0,-\t) node[anchor=north] {$f_1'$};
\draw (2,-\t) node[anchor=north] {$f_2'$};
\draw (4,-\t) node[anchor=north] {$f_3'$};
\smalldot{0,0};
\smalldot{0,2};
\smalldot{0,4};
\smalldot{2,0};
\smalldot{2,2};
\smalldot{2,4};
\smalldot{4,0};
\smalldot{4,2};
\smalldot{4,4-\t};
\smalldot{4-\t,4};
\end{scope}
}
}

Unlike our first proof, this proof is not self-contained because we
rely on the fact that on a curve of genus one, no function has a
single simple pole.

\subsection{elliptic curves}\label{sec:elliptic}

We retain the assumption that the characteristic of the field is not
$2$.

Starting with the equation $x^2 (1-d y^2) = (1 - c y^2)$ of the
Edwards curve, we can multiply both sides by $(1- d y^2)$ to bring it
into the form
\[
w^2 = (1 - d y^2) (1 - c y^2),
\]
where $w = x (1 - d y^2)$.  This a Jacobi quartic.  It is an elliptic
curve whenever the polynomial in $y$ on the right-hand side has degree
four and is separable.  In particular, if $c=1$ and $d=t^2\ne 1$, it
is an elliptic curve.

After passing to a quadratic extension if necessary, every elliptic
curve is isomorphic to an Edwards curve.  This observation can be used
to give a new proof that a general elliptic curve $E$ (say chord and
tangent addition in Weierstrass form) is a group.  To carry this out,
write the explicit isomorphism $E\to E'$ to an Edwards curve taking
the binary operation $\oplus_E$ on $E$ to the group operation on the
Edwards curve.  Then associativity for $\oplus_E$ follows from
associativity on the Edwards curve.

\section{Acknowledgements}

A number of calculations are reworkings of calculations found in
Edwards, Bernstein, Lange et al.~\cite{edwards2007normal},
\cite{bernstein2008twisted}, \cite{bernstein2007faster}.  Inspiration
for this article comes from Bernstein and Lange's wonderfully gentle
introduction to elliptic-curve cryptography at the 31st Chaos
Communication Congress in December, 2014.  They use the group law on
the circle to motivate the group law on Edwards elliptic curves.
Hyperbolic addition is introduced for Edwards elliptic curves in
\cite{arene2011faster}.  

\bibliography{refs} 
\bibliographystyle{alpha}

\end{document}